\newtheorem{theorem}{Theorem}[section]
\newtheorem{lemma}[theorem]{Lemma}
\newtheorem{proposition}[theorem]{Proposition}
\newtheorem{remark}[theorem]{Remark}
\newtheorem{corollary}[theorem]{Corollary}
\numberwithin{equation}{section}
\begin{document}
%Harnack estimates for PME and FDE under the Ricci flow
\title[]{ Aronson-B\'enilan estimates for the porous medium equation under the Ricci flow}

\author{Huai-Dong Cao and Meng Zhu}

\address{Huai-Dong Cao \\ Department of Mathematics\\ University of Macau\\Macao, China \& Department of Mathematics\\Lehigh University \\Bethlehem, PA 18015, USA} \email{huc2@lehigh.edu}

\address{Meng Zhu\\ Shanghai Center for Mathematical Sciences\\Fudan University\\Shanghai, 200433, China} \email{mengzhu@fudan.edu.cn}
\date{}
\thanks{The first author was partially supported by NSF Grant DMS-0909581 and by FDCT/016/2013/A1.
The second author is partially supported by China Postdoctral Science Foundation Grant No. 2013M531105}
%\email{mengzhu@fudan.edu.cn}
\maketitle

\begin{abstract}
In this paper we study the porous medium equation (PME) coupled with the Ricci flow on complete manifolds with bounded nonnegative curvature. In particular, we derive Aronson-B\'enilan and Li-Yau-Hamilton type differential Harnack estimates for positive solutions  to the PME, with a linear forcing term, under the Ricci flow.
%In addition, the Li-Yau-Hamilton estimate that we obtained for the conjugate heat equation extends the one %of X. Cao and R. Hamilton in \cite{CaHa2009} to noncompact setting.
\end{abstract}

\textbf{Keywords:} Porous medium equation, Ricci flow, Aronson-B\'enilan estimate, Harnack inequality

\section{Introduction}

Differential equations of the form
\begin{equation}\label{eq1}
\frac{\partial u}{\partial t}=\Delta u^{p}
\end{equation}
are of great interest due to their importance in mathematics, physics, and applications in many other fields. For $p=1$ it is the well-known heat equation. When $p>1$, equation \eqref{eq1} is known as the porous medium equation (PME), which models the flow of gas through porous medium, ground water filtration, heat radiation in plasmas, etc. We refer the readers to [18] for basic theory and various applications of the porous medium equation, and \cite{DH} concerning the regularity of the free boundary
for the porous medium equation, in the Euclidean space. In the case where $p<1$, equation \eqref{eq1} becomes the so-called fast diffusion equation (FDE).

In 1979, Aronson and B\'enilan \cite{ArBe1979} obtained an important second order differential inequality
\begin{equation}
\sum_i\frac{\partial}{\partial x_i}(pu^{p-2}\frac{\partial u}{\partial x_i})\geq-\frac{\kappa}{t},
\end{equation}
where $\kappa=\frac{n}{2+n(p-1)}$, for any positive solution $u$ to \eqref{eq1} on the Euclidean space ${\mathbb R}^n$ with $p>(1-\frac{2}{n})^+$.
On the other hand, for any complete Riemannian manifold $(M^n, g_{ij})$ with Ricci curvature bounded from below by $-K$ for some $K\geq0$, Li and Yau \cite{LiYa1986} discovered the following celebrated differential Harnack estimate, now widely called the Li-Yau estimate,
\begin{equation}\label{Li-Yau}
\frac{|\nabla u|^2}{u^2} - \alpha\frac{u_t}{u} \leq \frac{n\alpha^2K}{2(\alpha-1)}+\frac{n\alpha^2}{2t},
\end{equation}
where $\alpha>1$, for any positive solution $u$ to the heat equation on $M^n$. Moreover, in the special case where $K=0$, there holds the sharp Li-Yau estimate
\begin{equation}
\frac{|\nabla u|^2}{u^2} - \frac{u_t}{u} \leq \frac{n}{2t}.
\end{equation}
In the same paper, Li and Yau showed that significant results can be obtained by applying the differential Harnack estimates above, such as the classical parabolic Harnack inequality, Gaussian upper and lower bounds of the heat kernel, estimates of the Green's function, and estimates of the Dirichlet and Neumann eigenvalues of the Laplace operator.

Subsequently, matrix differential Harnack estimates for the heat equation were proved by Hamilton \cite{Ham1993cag} on Riemannian manifolds, and by Ni and the first author \cite{CaNi2005} on K\"ahler manifolds.

Unlike the study of the heat equation, PME on Riemannian manifolds were only investigated recently. In \cite{Vaz2007}, Aronson-B\'enilan and Li-Yau type estimate was first proved by V\'azquez for positive solutions to PME on complete manifolds with nonnegative Ricci curvature. More recently, under the weaker assumption that the Ricci curvature of $M$ is bounded from below by $-K$ for some $K\ge 0$, Lu, Ni, V\'azquez and Villani \cite{LNVV2009} derived more general Aronson-B\'enilan and Li-Yau type gradient estimates for PME and some FDE. In particular, for any $\alpha >1$ and any smooth bounded positive solution $u$ to PME, they proved
\begin{equation}\label{eqNi}
\alpha\frac{v_t}{v}-\frac{|\nabla v|^2}{v}\geq -(p-1)\kappa\alpha^2\left[\frac{1}{t}+\frac{(p-1)}{\alpha-1} K v_{max}\right],
\end{equation}
where $v=\frac{p}{p-1}u^{p-1}$, $\kappa=\frac{n}{2+n(p-1)}$,  and $v_{max}=\max_{M\times[0,T]}v$.  See also the related work in \cite{HHL2013}.
%In addition, certain Aronson-B\'enilan and Li-Yau type estimate for FDE is also proved in \cite{LNVV2009}.

The study of differential Harnack estimates for parabolic equations under the Ricci flow
\begin{equation}\label{rf}
\frac{\partial g_{ij}}{\partial t}=-2R_{ij}
\end{equation}
on complete manifolds was originated from Hamilton's work. In \cite{Ham1988}, for the Ricci flow on Riemann surfaces with positive curvature, he showed a Li-Yau type inequality for the evolving scalar curvature of the Ricci flow.\footnote{Later, Chow \cite{Cho1991} was able to remove the positivity assumption and obtained a similar estimate.} In higher dimensions, both matrix and trace differential Harnack estimates, also known as Li-Yau-Hamilton estimates, were obtained by Hamilton \cite{Ham1993jdg} for the Ricci flow with bounded and nonnegative curvature operator.   These estimates played a crucial role in the study of singularity formations of the Ricci flow on three manifolds and the Hamilton-Perelman solution to the Poincar\'e conjecture. We remark that
Brendle \cite{Brendle} has generalized Hamilton's differential Harnack inequality under the weaker curvature assumption
of  $(M^n, g(t)) \times {\mathbb R}^2$ having bounded and nonnegative isotropic curvature. The Li-Yau-Hamilton estimate for the K\"ahler-Ricci flow with nonnegative holomorphic bisectional curvature was obtained by the first author \cite{Cao1992}. In addition, in \cite{Per2002a} Perelman obtained a Li-Yau type estimate for the fundamental solution of the conjugate heat equation under the Ricci flow (see also \cite{Ni2006}).

Recently, by using Hamilton's trace differential Harnack estimate \cite{Ham1993jdg}, X. Cao and Hamilton \cite{CaHa2009} proved the Li-Yau-Hamilton type estimate
\begin{equation}\label{Cao-Hamilton}
2\Delta v-|\nabla v|^2-3R-\frac{2n}{t}\leq0
\end{equation}
on compact manifolds, where $v=-\ln u$ and $u$ is any positive solution to the forward conjugate heat equation
\begin{equation}\label{conjugate equation}\frac{\partial u}{\partial t}=\Delta u+Ru\end{equation}
under the Ricci flow with nonnegative curvature operator. Also, certain matrix differential Harnack estimate was shown by Ni \cite{Ni2007} for positive solutions of \eqref{conjugate equation} under the K\"ahler-Ricci flow.

%In this paper, we study equations of type \eqref{eq1} with a linear forcing term under the Ricci flow on %complete manifolds with bounded nonnegative curvature operator, and prove Aronson-B\'enilan and
%Li-Yau-Hamilton type differential Harnack estimates for positive solutions to these equations.

In this paper, we consider equation of type \eqref{eq1} with  a linear forcing term,
\begin{equation}\label{eq2}
\frac{\partial u}{\partial t}=\Delta u^{p}+ Ru,
\end{equation}
under the Ricci flow on a complete manifold $M^n$  and prove Aronson-B\'enilan and
Li-Yau-Hamilton type differential Harnack estimates for positive solutions to the PME (i.e., \eqref{eq2} with $ p>1$). Here $\Delta=\Delta_{g(t)}$ is the Laplace operator with respect to the evolving metric $g(t)$ of the Ricci flow \eqref{rf}, and $R=R(x, t)$ is the scalar curvature of $g(t)$. One may check directly that, thanks to the term $Ru$, any smooth solution $u$ of \eqref{eq2} satisfies
$$\frac{d}{dt}\int_M u d\mu=0,$$
because the volume form evolves under the equation $d(d\mu)/dt=-R d\mu$.
Note that  when $p=1$, \eqref{eq2} is simply the conjugate equation \eqref{conjugate equation} to the backward heat equation.

When $p>1$, one usually expresses the PME \eqref{eq1} in the following form:
\begin{align}\label{eq1.1}
\frac{\partial u}{\partial t}=\nabla(u\nabla v)=(p-1)v\Delta u+(p-1)\nabla u\cdot\nabla v,
\end{align}
where $v=\frac{p}{p-1}u^{p-1}$. It is clear that if $u$ is a positive solution to \eqref{eq1}, then \eqref{eq1.1} is a parabolic equation, hence the regularity can be studied.

Similarly, with $v=\frac{p}{p-1}u^{p-1}$, \eqref{eq2} can be rewritten as
\begin{align}\label{eq1.2}
\frac{\partial u}{\partial t}=(p-1)v\Delta u+ (p-1)\nabla u\cdot \nabla v+Ru,
\end{align}
which can still be considered formally as the conjugate equation to \eqref{eq1.1} under the Ricci flow if we fix the potential function $v$.
Indeed, one can see that for any smooth functions $u$ and $w$
\begin{align*}
\frac{d}{dt}\int_M uwd\mu &= \int_M (u_tw + uw_t-Ruw) d\mu \\
&= \int_M \left[u_t+(p-1)v\Delta u+(p-1)\nabla u\nabla v\right]wd\mu\\
&\quad\  +\int_M u\left[w_t-(p-1)v\Delta w-(p-1)\nabla w\nabla v - Rw\right]d\mu.
\end{align*}

Our main result in this paper is the following Aronson-B\'enilan and Li-Yau-Hamilton type estimate for smooth positive solutions to the PME \eqref{eq2}:

\begin{theorem}\label{theorem 1}
Let $(M^n, g_{ij}(t))$, $t\in[0, T)$, be a complete solution to the Ricci flow with bounded and nonnegative curvature operator at each time $t$. If $u$ is a bounded smooth positive solution to \eqref{eq2} with $p>1$ and $v=\frac{p}{p-1}u^{p-1}$, then we have
$$\frac{|\nabla v|^2}{v}-2\frac{v_t}{v}-\frac{R}{v}-\frac{d}{t}\leq 0$$
on $M\times(0,T)$, where $d=\max\{2\alpha, 1\}$ and $\alpha=\frac{n(p-1)}{1+n(p-1)}$.
\end{theorem}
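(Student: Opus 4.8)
The plan is to apply a maximum-principle argument to the quantity on the left of the asserted inequality,
\[
P:=\frac{|\nabla v|^2}{v}-2\,\frac{v_t}{v}-\frac{R}{v}.
\]
I would first record the equation satisfied by $v$ under the coupled system \eqref{eq2}--\eqref{rf}: since $\nabla u^{p}=u\,\nabla v$ and $\Delta u^{p}=\nabla\cdot(u\,\nabla v)$, a direct computation gives
\[
v_t=(p-1)v\,\Delta v+|\nabla v|^2+(p-1)Rv .
\]
Substituting this back into the definition of $P$ yields the equivalent form
\[
P=-2(p-1)\Delta v-\frac{|\nabla v|^2}{v}-2(p-1)R-\frac{R}{v},
\]
so the theorem is equivalent to $tP\le d$ on $M\times(0,T)$.

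The next step is to derive an evolution inequality for $P$ (equivalently, for $W:=tP$) under a drift heat operator of the form $\Box:=\partial_t-(p-1)v\,\Delta-\beta\,\nabla v\cdot\nabla$, with the constant $\beta$ chosen so that the third-order terms cancel. The necessary identities are the commutation formula $\partial_t\Delta v=\Delta v_t+2R_{ij}\nabla_i\nabla_j v$ for the evolving Laplacian, the Bochner formula $\Delta|\nabla v|^2=2|\nabla^2 v|^2+2\nabla v\cdot\nabla\Delta v+2R_{ij}\nabla_i v\nabla_j v$, the evolution $\partial_t R=\Delta R+2|\mathrm{Ric}|^2$ of the scalar curvature, and the product and quotient rules for the terms $|\nabla v|^2/v$ and $R/v$. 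Collecting everything, one obtains on the ``good'' side the square terms $|\nabla^2 v|^2\ge\frac1n(\Delta v)^2$ (used together with a sharper Cauchy--Schwarz inequality that also incorporates $\nabla v$) and $|\mathrm{Ric}|^2\ge\frac1n R^2$, as well as the sign conditions $R\ge0$ and $R_{ij}\nabla_i v\,\nabla_j v\ge0$ which follow from the nonnegativity of the curvature operator; on the ``bad'' side one is left with the curvature--gradient cross terms $R_{ij}\nabla_i\nabla_j v$, $\nabla R\cdot\nabla v$ and $v\,\Delta R$.

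Dominating this bad part is the heart of the matter, and it is here that the nonnegative curvature operator hypothesis is used in full strength. I would invoke Hamilton's trace differential Harnack estimate \cite{Ham1993jdg},
\[
\partial_t R+\frac{R}{t}+2\,\nabla R\cdot X+2R_{ij}X^iX^j\ \ge\ 0\qquad\text{for every vector field }X,
\]
applied with $X$ a suitable multiple of $\nabla v$: this simultaneously absorbs the $\nabla R\cdot\nabla v$ and $R_{ij}\nabla_i v\,\nabla_j v$ terms and produces the factor $R/t$ that ultimately accounts for the $d/t$ on the right, while Hamilton's matrix Harnack inequality disposes of the leftover second-order term $R_{ij}\nabla_i\nabla_j v$ in case it does not already cancel against $|\nabla^2 v|^2$. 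The outcome should be a differential inequality of the schematic shape $\Box W\le \frac1t\,W\,(c_1 W-c_2)+(\text{nonpositive terms})$, after which a standard maximum-principle argument---localized by a Li--Yau type cutoff function since $M$ need not be compact, with the necessary bounds on $|\nabla v|$ and $\Delta v$ obtained from interior parabolic estimates for the bounded solution $u$ on the bounded-curvature background---forces $W\le d$. The two competing numbers in $d=\max\{2\alpha,1\}$ should emerge from the two sources of ``forcing'': the quadratic porous-medium contribution (responsible for the value $2\alpha$, with $\alpha=\frac{n(p-1)}{1+n(p-1)}$) and the Harnack/curvature contribution (responsible for the value $1$). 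I expect the curvature bookkeeping in this final step---and in particular pinning the constant down to exactly $d$---to be the main obstacle; the noncompact localization, while necessary, is routine.
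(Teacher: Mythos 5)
Your overall strategy coincides with the paper's: the quantity $P$ is exactly the paper's $F$ with $b=2$, $c=1-b=-1$; the evolution is computed under $\mathcal{L}=\partial_t-(p-1)v\Delta$ with the drift term $2p\,\nabla F\cdot\nabla v$; Hamilton's trace Harnack applied with $X$ proportional to $\nabla v$ absorbs the $\nabla R\cdot\nabla v$ and $R_{ij}\nabla_i v\nabla_j v$ terms and supplies the $R/t$; and your reading of the two sources of $d=\max\{2\alpha,1\}$ (the quadratic term forcing $d\ge 2\alpha$, the curvature term $-(b-1)(\tfrac{2d}{b}-1)\tfrac{tR}{v}$ forcing $d\ge b/2=1$) is correct. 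One small inaccuracy: no matrix Harnack is needed for the leftover $R_{ij}\nabla_i\nabla_j v$; at $b=2$ it combines with $-2(p-1)|\nabla^2v|^2$ and $2c(p-1)|Rc|^2$ into the perfect square $-2(p-1)|\nabla^2v+Rc|^2$ with zero remainder (the remainder $\tfrac{(b-2)^2}{2}(p-1)|Rc|^2$ vanishes exactly when $b=2$), which is why the right-hand side of the theorem is $0$ rather than a multiple of $R_{max}$.

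The genuine gap is in the last step, which you dismiss as ``routine.'' A single cutoff-function maximum principle does \emph{not} yield the sharp inequality: because the metric evolves, the cutoff $\phi=\eta(r(x,t)/2R_0)$ satisfies only $\partial_t\phi\le 16R_{max}$ (via $\partial_t r\ge -R_{max}\,r$), and this contributes an error term of size $C\,R_{max}$ that does \emph{not} decay as $R_0\to\infty$ — only the $v_{max}/R_0^2$ terms do. So your scheme, as written, produces $tP\le d+C\,t\,R_{max}$ (the paper's Corollary 3.3), not $tP\le d$. The paper closes this gap with a second pass: using the rough bound to know that $H=t(F-K)-d$ is negative near spatial infinity, it perturbs by $\epsilon e^{At}f$ where $f$ is an exhaustion function with \emph{time-uniformly} bounded gradient and Hessian along the flow (Lemma 3.4, which itself requires the bounded-curvature hypothesis), runs the maximum principle on $\tilde H=H-\epsilon\psi$ on all of $M$ with no cutoff, chooses $A$ large to defeat the $\epsilon\psi$ terms, and lets $\epsilon\to0$; the auxiliary constant $K$ can then be taken to be $0$ when $b=2$. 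Without this refinement (or some substitute for it) you do not reach the stated conclusion. A secondary issue: you invoke ``interior parabolic estimates'' for bounds on $|\nabla v|$ and $\Delta v$, but no such a priori bounds are available or needed here — the cutoff argument is designed precisely to avoid them (only the later Theorem 3.9 assumes $|\nabla v|$ bounded).
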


As immediate consequences of Theorem \ref{theorem 1}, we have the following two versions of Harnack inequalities.

\begin{corollary}\label{cor 1.1} Under the same assumptions as in Theorem \ref{theorem 1},  if $v_{min}=\inf_{M\times[0,T)}v>0$, then for any points $x_1$, $ x_2\in M$ and $0<t_1<t_2<T$, we have
\begin{equation}\label{eq cor1}
v(x_1, t_1) \leq v(x_2, t_2) (\frac{t_2}{t_1})^{d/2}\exp\left(\frac{\Gamma}{2v_{min}}\right),
\end{equation}
where  $d$ is the constant in Theorem \ref{theorem 1}, and $\Gamma=\inf_{\gamma}\int_{t_1}^{t_2}(R+\left|\frac{d\gamma}{d\tau}\right|^2_{g_{ij}(\tau)})d\tau$ with the infimum taking over all smooth curves $\gamma(\tau)$ in $M$, $t_1\le \tau\le t_2$, so that $\gamma(t_1)=x_1$ and $\gamma(t_2)=x_2$.

\end{corollary}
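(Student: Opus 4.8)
The plan is to derive the Harnack inequality \eqref{eq cor1} from the pointwise differential Harnack estimate of Theorem~\ref{theorem 1} by integrating along space-time paths, which is the standard mechanism by which a Li-Yau-Hamilton inequality yields a classical Harnack inequality.

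First I would fix $x_1,x_2\in M$ and $0<t_1<t_2<T$, and let $\gamma:[t_1,t_2]\to M$ be an arbitrary smooth curve with $\gamma(t_1)=x_1$ and $\gamma(t_2)=x_2$. Put $\phi(\tau)=\ln v(\gamma(\tau),\tau)$, so that
\[
\phi'(\tau)=\frac{v_t}{v}+\frac{\langle\nabla v,\dot\gamma\rangle}{v},
\]
all terms being evaluated at $(\gamma(\tau),\tau)$ with respect to $g(\tau)$. Theorem~\ref{theorem 1} rearranges to $\dfrac{v_t}{v}\ge\dfrac{|\nabla v|^2}{2v}-\dfrac{R}{2v}-\dfrac{d}{2t}$, and by Cauchy-Schwarz together with Young's inequality $|\nabla v|\,|\dot\gamma|\le\tfrac12|\nabla v|^2+\tfrac12|\dot\gamma|^2$ one gets $\dfrac{\langle\nabla v,\dot\gamma\rangle}{v}\ge-\dfrac{|\nabla v|^2}{2v}-\dfrac{|\dot\gamma|^2}{2v}$. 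Adding these two estimates, the $|\nabla v|^2/(2v)$ terms cancel and
\[
\phi'(\tau)\ \ge\ -\frac{R+|\dot\gamma|^2_{g(\tau)}}{2v}-\frac{d}{2t}.
\]
Since nonnegativity of the curvature operator is preserved by the Ricci flow, $R\ge 0$ along $g(t)$, hence $R+|\dot\gamma|^2_{g(\tau)}\ge 0$, and the hypothesis $v\ge v_{min}>0$ then gives $\phi'(\tau)\ge-\dfrac{1}{2v_{min}}\bigl(R+|\dot\gamma|^2_{g(\tau)}\bigr)-\dfrac{d}{2t}$. Integrating over $[t_1,t_2]$,
\[
\ln v(x_2,t_2)-\ln v(x_1,t_1)\ \ge\ -\frac{1}{2v_{min}}\int_{t_1}^{t_2}\bigl(R+|\dot\gamma|^2_{g(\tau)}\bigr)\,d\tau-\frac{d}{2}\ln\frac{t_2}{t_1}.
\]

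Finally I would rearrange this inequality to bound $\ln v(x_1,t_1)$ from above, take the infimum over all admissible curves $\gamma$ so that the path integral becomes $\Gamma$, and exponentiate; this produces exactly \eqref{eq cor1}. The argument is routine once Theorem~\ref{theorem 1} is available; the only points requiring a word of care are the nonnegativity of $R$ under the flow (so that $1/v$ may legitimately be estimated from above by $1/v_{min}$ in the term $(R+|\dot\gamma|^2)/(2v)$, which would otherwise have the wrong sign) and the observation that the displayed inequality holds for \emph{every} curve $\gamma$, so passing to the infimum on the right-hand side is valid even when $\Gamma$ is not attained. I do not expect any substantial obstacle.
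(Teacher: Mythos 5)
Your proposal is correct and follows essentially the same route as the paper: the authors prove the general-$b$ version (their Corollary 3.8) by integrating $\frac{d}{d\tau}\log v(\gamma(\tau),\tau)$ along a space-time curve, applying Young's inequality to $\langle\nabla v,\dot\gamma\rangle/v$ so the gradient term is absorbed by the differential Harnack estimate, and then specializing to $b=2$, which is exactly your computation. Your explicit remarks about needing $R\ge 0$ to replace $v$ by $v_{min}$ and about taking the infimum over curves only at the end are both correct and are used implicitly in the paper's argument.
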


\begin{corollary}\label{cor 1.2}
Under the same assumptions as in Theorem \ref{theorem 1},  for any points $x_1$, $ x_2\in M$ and $0<t_1<t_2<T$, we have
\begin{equation}\label{eq cor2}
v(x_1, t_1)-v(x_2, t_2)\leq \frac{d}{2} v_{max} \ln \frac{t_2}{t_1} + \frac{1}{2} R_{max} (t_2-t_1)+\frac{1}{2}\frac{d_{t_1}^2(x_1,x_2)}{t_2-t_1}
\end{equation}
where $d$ is the constant in Theorem \ref{theorem 1}, $v_{max}=\sup_{M\times[0,T)}v$, $R_{max}=\sup_{M\times[0,T)}R$, and $d_{t_1} (x_1,x_2)$ is the distance between $x_1, x_2$ at time $t_1$ with respect to the metric $g_{ij}(t_1)$.
\end{corollary}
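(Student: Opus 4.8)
The plan is to obtain \eqref{eq cor2} by integrating the pointwise Harnack inequality of Theorem~\ref{theorem 1} along a suitable space-time path. Since $v>0$, I first multiply the inequality $\frac{|\nabla v|^2}{v}-2\frac{v_t}{v}-\frac{R}{v}\le \frac{d}{t}$ through by $v$ to clear all denominators, obtaining the equivalent bound
\[
-v_t\ \le\ \frac{1}{2}\Big(\frac{d\,v}{t}+R-|\nabla v|^2\Big)
\]
on $M\times(0,T)$; this is the form I will use.

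Now fix $x_1,x_2\in M$ and $0<t_1<t_2<T$, and let $\eta:[t_1,t_2]\to M$ be any smooth path with $\eta(t_1)=x_1$ and $\eta(t_2)=x_2$, all norms and inner products below being taken with respect to $g_{ij}(\tau)$. Then
\[
v(x_1,t_1)-v(x_2,t_2)\ =\ -\int_{t_1}^{t_2}\frac{d}{d\tau}\,v\big(\eta(\tau),\tau\big)\,d\tau\ =\ \int_{t_1}^{t_2}\Big(-v_\tau-\nabla v\cdot\dot\eta\Big)\,d\tau .
\]
Substituting the displayed bound for $-v_\tau$ and applying Young's inequality $-\nabla v\cdot\dot\eta\le |\nabla v|\,|\dot\eta|\le \tfrac12|\nabla v|^2+\tfrac12|\dot\eta|^2$, the two $\tfrac12|\nabla v|^2$ contributions cancel exactly and we are left with
\[
v(x_1,t_1)-v(x_2,t_2)\ \le\ \int_{t_1}^{t_2}\Big(\frac{d\,v}{2\tau}+\frac{R}{2}+\frac12\,|\dot\eta|^2_{g_{ij}(\tau)}\Big)\,d\tau .
\]

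It remains to estimate the three terms and to choose $\eta$. For the first two, bounding $v\le v_{max}$ and $R\le R_{max}$ pointwise gives $\int_{t_1}^{t_2}\tfrac{d\,v}{2\tau}\,d\tau\le \tfrac{d}{2}v_{max}\ln\tfrac{t_2}{t_1}$ and $\int_{t_1}^{t_2}\tfrac{R}{2}\,d\tau\le \tfrac12 R_{max}(t_2-t_1)$. For the last term, I take $\eta$ to be the constant-speed parametrization over $[t_1,t_2]$ of a minimizing $g_{ij}(t_1)$-geodesic from $x_1$ to $x_2$, which exists by completeness; then $|\dot\eta|^2_{g_{ij}(t_1)}\equiv \big(d_{t_1}(x_1,x_2)/(t_2-t_1)\big)^2$. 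Since the curvature operator is nonnegative we have $R_{ij}\ge 0$, hence $\partial_\tau g_{ij}=-2R_{ij}\le 0$, so $g_{ij}(\tau)\le g_{ij}(t_1)$ for $\tau\ge t_1$ and therefore $|\dot\eta|^2_{g_{ij}(\tau)}\le |\dot\eta|^2_{g_{ij}(t_1)}$. Integrating yields $\int_{t_1}^{t_2}\tfrac12|\dot\eta|^2_{g_{ij}(\tau)}\,d\tau\le \tfrac12\,d_{t_1}^2(x_1,x_2)/(t_2-t_1)$, and adding the three contributions gives precisely \eqref{eq cor2}.

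The argument has no genuinely hard step; the two points that make it work are (i) the coefficient in Young's inequality is chosen so that the $|\nabla v|^2$ term contributed by Theorem~\ref{theorem 1} is cancelled completely, so that no a priori gradient bound on $v$ is needed, and (ii) the monotonicity $\partial_\tau g_{ij}\le 0$, valid because $R_{ij}\ge 0$, which lets one replace the length of $\eta$ in the evolving metric by its length in $g_{ij}(t_1)$. Without a curvature sign one would instead have to control the distortion of lengths along the Ricci flow, which is the only place the nonnegativity hypothesis enters the corollary beyond its role in Theorem~\ref{theorem 1}.
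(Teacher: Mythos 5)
Your proof is correct and is essentially the paper's own argument: the authors prove a general version (Corollary 3.8) for arbitrary $b\ge 1$ by integrating the Harnack quantity along a constant-speed $g(t_1)$-geodesic, using Young's inequality with the coefficient tuned to cancel the $|\nabla v|^2$ term and the monotonicity $\partial_\tau g_{ij}=-2R_{ij}\le 0$ to pass to the $t_1$-metric, and then set $b=2$; your argument is exactly this specialization.
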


More generally, for PME \eqref{eq2} with $p>1$, we actually have

\begin{theorem}\label{harnack PME}
Let $(M^n, g_{ij}(t))$, $t\in[0, T]$, be a complete solution to the Ricci flow with bounded and nonnegative curvature operator. If $u$ is a bounded smooth positive solution to \eqref{eq2} with $p>1$, then for $v=\frac{p}{p-1}u^{p-1}$ and any $b\in[1,\infty)$, we have
$$\frac{|\nabla v|^2}{v}-b\frac{v_t}{v}-(b-1)\frac{R}{v}-\frac{d}{t}\leq C_0|b-2|R_{max}$$
on $M\times(0,T]$, where $\alpha=\frac{bn(p-1)}{2+bn(p-1)}$, $d=\max\{b\alpha, \frac{b}{2}\}$, $R_{max}=\sup_{M\times[0,T]}R$, and
\begin{equation*}
C_0=\left\{\begin{array}{ll}
\frac{2\alpha}{n}+\sqrt{\frac{b\alpha(p-1)}{2}}, & if\ b\geq2\\
\sqrt{\frac{b\alpha(p-1)(n-1)}{2n}}, & if\ 1\leq b<2.
\end{array}\right.
\end{equation*}
\end{theorem}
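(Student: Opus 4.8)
The plan is to pass to the pressure variable $v=\frac{p}{p-1}u^{p-1}$. A direct computation shows that if $u$ solves \eqref{eq2} then $v$ solves the degenerate parabolic equation
$$v_t=(p-1)v\Delta v+|\nabla v|^2+(p-1)Rv,$$
coupled to the Ricci flow \eqref{rf}. I would then study quantities built from $v$ against the degenerate heat operator $\square:=\partial_t-(p-1)v\Delta-2\nabla v\cdot\nabla$, where the $2\nabla v\cdot\nabla$ drift is the one coming from the nonlinearity. Since the inequality to be proved is equivalent, after multiplying by $tv>0$, to the nonnegativity of
$$H:=t\big(b\,v_t+(b-1)R-|\nabla v|^2\big)+d\,v+C_0|b-2|R_{\max}\,t\,v,$$
the goal becomes to show $H\ge 0$ on $M\times(0,T]$. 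Note $H\to d\,v\ge 0$ as $t\to 0^+$, since $u$, hence $v$, is smooth, positive and bounded on $M\times[0,T]$.

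The core is the computation of $\square H$. I would use: (i) the Bochner formula $\Delta|\nabla v|^2=2|\nabla^2 v|^2+2\langle\nabla v,\nabla\Delta v\rangle+2\mathrm{Ric}(\nabla v,\nabla v)$; (ii) the effect of the evolving metric, namely $\partial_t(g^{ij}v_iv_j)=2R^{ij}v_iv_j+2\langle\nabla v,\nabla v_t\rangle$ and $\partial_t\Delta v=\Delta v_t+2R^{ij}\nabla_i\nabla_j v$ (plus a $\nabla R\cdot\nabla v$ term) — the Ricci contributions here are precisely what cancels the Bochner Ricci term, which is why the estimate persists under \eqref{rf}; (iii) the curvature evolution $\partial_t R=\Delta R+2|\mathrm{Ric}|^2$ together with $|\mathrm{Ric}|^2\ge R^2/n$; and (iv) Hamilton's matrix (equivalently trace) differential Harnack inequality for the Ricci flow with bounded nonnegative curvature operator, from which I would use both $\partial_t R+R/t\ge 0$ and, choosing the auxiliary vector field to be a suitable multiple of $\nabla v$, the full trace inequality to absorb the cross terms involving $\nabla R$ and $\mathrm{Ric}(\nabla v,\cdot)$. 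After these substitutions one arrives at an inequality of the schematic form $\square H\le(\text{bounded coeff})\cdot H-t\cdot\mathcal{Q}+(\text{error}\ \propto|b-2|R_{\max})$, where $\mathcal{Q}$ is a quadratic form in $\Delta v$ (and, in one regime, in the Hessian of $v$ in the $\nabla v$ direction) and in $R$.

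The maximum principle then finishes the argument. If $H$ were negative somewhere, then — after a standard spatial localization with a cutoff $\varphi$ supported near a basepoint, whose first- and second-order terms are controlled by $O(\rho^{-2})$ using $\mathrm{Ric}\ge 0$ and the Laplacian comparison theorem, together with bounded curvature and (local) bounds for $v$ and $\nabla v$, and letting $\rho\to\infty$ — there would be a first interior point $(x_0,t_0)$ with $t_0>0$, $H=0$, $\nabla H=0$, $\Delta H\ge 0$, $\partial_t H\le 0$ there. Feeding this into the evolution inequality for $H$ forces $-t_0\mathcal{Q}+(\text{error})\ge 0$ at $(x_0,t_0)$, contradicting the algebraic estimate of the previous step.

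The final and most delicate point is the algebraic optimization producing the precise constants. One uses the trace inequality $|\nabla^2 v|^2\ge(\Delta v)^2/n$ — or its refinement splitting off the Hessian in the $\nabla v$ direction, which is where the factor $(n-1)$ enters — substitutes $\Delta v=\frac{1}{(p-1)v}\big(v_t-|\nabla v|^2-(p-1)Rv\big)$ to express everything through the constituents of $H$, and completes the square. Optimizing the completed square, and using $\partial_t R+R/t\ge 0$ for the part not controlled by the porous-medium structure, forces $\alpha=\frac{bn(p-1)}{2+bn(p-1)}$ and then $d=\max\{b\alpha,\tfrac{b}{2}\}$, the $\max$ recording which of the "porous-medium" ($b\alpha$) or "heat-like" ($b/2$) contributions dominates. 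The terms that cannot be absorbed are linear in $R$ with a coefficient proportional to $|b-2|$ (hence vanishing at $b=2$, which recovers the clean Theorem~\ref{theorem 1}); bounding them by AM–GM against the available square term yields $C_0$, and the two cases $b\ge 2$ versus $1\le b<2$ arise because the sign of this leftover coefficient — and therefore which square term is left over to absorb it — switches at $b=2$. I expect the bookkeeping of this last step, together with the correct use of Hamilton's Harnack to kill the $\nabla R$ cross terms, to be the main obstacle.
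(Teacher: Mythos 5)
Your choice of quantity, the evolution-equation machinery (Bochner formula, the evolving-metric commutators, $\partial_t R=\Delta R+2|\mathrm{Rc}|^2$, Hamilton's trace Harnack with $V=\nabla v$ to kill the $\nabla R$ and $\mathrm{Ric}(\nabla v,\nabla v)$ cross terms), and the completing-the-square that produces $\alpha$, $d$ and the dichotomy at $b=2$ all match the paper's argument. The genuine gap is in the maximum-principle step. A single localization by a cutoff $\varphi$ followed by $\rho\to\infty$ cannot deliver the stated constant $C_0|b-2|R_{\max}$: under the Ricci flow the cutoff contributes not only spatial error terms of size $O(\rho^{-2})$ but also a time-derivative term, since $\partial_t r(x,t)\geq -R_{\max}\,r(x,t)$ gives $|\partial_t\varphi|\lesssim R_{\max}$ \emph{independently of the radius}. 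Consequently the one-pass argument only yields a weaker global bound with an extra additive term of order $b\alpha R_{\max}$ (this is exactly the paper's Corollaries 3.2 and 3.7, where a coefficient like $16b\alpha R_{\max}$ survives the limit $R_0\to\infty$). Your proposal does not explain how to remove this leftover, and without removing it you do not get the sharp coefficient, in particular you do not recover the clean $b=2$ case of Theorem \ref{theorem 1}.

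The paper closes this gap with a second, genuinely different pass in the style of Hamilton: having the rough global bound in hand, it sets $\tilde H=H-\epsilon e^{At}f$ where $f$ is an exhaustion function with $|\nabla f|_{g(t)}$ and $|\nabla\nabla f|_{g(t)}$ uniformly bounded on $M\times[0,T]$ (Lemma \ref{distance function}). The rough bound plus $f\to\infty$ forces $\tilde H<0$ outside a compact set, so the maximum principle applies with \emph{no} cutoff error at all; the perturbation terms $\epsilon A t\psi$ and $\epsilon t v e^{At}\Delta f$ are then beaten by choosing $A$ large, and letting $\epsilon\to 0$ gives the sharp constant. You need to add this two-stage structure: the localized estimate is not the proof, it is the a priori decay input for the refined argument. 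Two smaller points: at the spatial maximum the gradient term $\nabla\varphi\cdot\nabla F$ should be absorbed using $F\nabla\varphi=-\varphi\nabla F$ and the term $-\tfrac{4(b-1)}{b^2n(p-1)}\tilde y(\tilde y-b\tilde z)$ rather than an assumed bound on $|\nabla v|$ (which the hypotheses do not give globally; the case $b=1$ is obtained in the paper by letting $b\to 1^+$ precisely because this absorption degenerates there); and the factor $(n-1)/n$ in the case $1\le b<2$ comes from the two-sided bound $R^2/n\le|\mathrm{Rc}|^2\le R^2$ valid under nonnegative curvature operator, not from splitting off the Hessian in the $\nabla v$ direction.
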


\begin{remark}
By the work of S. Brendle in \cite{Brendle} on the Li-Yau-Hamilton estimate for the Ricci flow, all of the above results remain valid if we replace the assumption of nonnegative curvature operator by the weaker condition that $(M^n, g(t)) \times {\mathbb R}^2$ has nonnegative isotropic curvature at each time $t$.
\end{remark}

\begin{remark}
We point out that one can apply similar techniques used in the proof of Theorem 1.4 to obtain Aronson-B\'enilan and Li-Yau-Hamilton type estimates for the FDE case, except the calculations are somewhat more involved. Moreover, by applying the same techniques, one can extend the Li-Yau-Hamilton estimate \eqref{Cao-Hamilton} of X. Cao and R. Hamilton \cite{CaHa2009} to the complete noncompact setting. These will be treated elsewhere.
\end{remark}

This paper is organized as follows. In Section 2, we introduce the main quantity, which is involved in our differential Harnack estimates for PME, and compute its evolution under the Ricci flow. In Section 3, we prove Theorem \ref{harnack PME} for positive solutions to the PME coupled with the Ricci flow  and derive the two Harnack inequalities stated in Corollary 1.2 and Corollary 1.3, respectively. \\

\hspace{-.4cm}\textbf{Acknowledgements}: The first author would like to thank the University of Macau, where part of the work was carried out during his visit in Spring of 2014, for its hospitality and the support by RDG010 grant. The second author is grateful to Professors Qing Ding, Jixiang Fu, Jiaxing Hong, Jun Li, Quanshui Wu, and Weiping Zhang for their constant support and encouragement. He also wants to thank the Shanghai Center for Mathematical Sciences, where the work was carried out, for its hospitality and support.

\section{The Main Quantity for PME and FDE}

In this section, we compute the evolution of certain quantity $F$ introduced below which is essential for deriving our Aronson-B\'enilan and Li-Yau-Hamilton estimates in the next section.
%The quantity is constructed for solutions of a sightly more general equation than \eqref{eq2}. More precisely, we
From now on let  $g_{ij}(t)$ be a solution to the Ricci flow \eqref{rf} on $M^n\times [0, T)$ such that $(M^n, g_{ij}(t))$ is a complete Riemannian manifold for each $t\in [0, T)$. Consider the equation
\begin{equation}\label{eq3}
\frac{\partial u}{\partial t}=\Delta u^p + aRu,
\end{equation}
where $a$ is an arbitrary constant, $p\in(0,1)\cup(1,\infty)$, and the Laplacian is with respect to the evolving metric $g_{ij}(t)$. Obviously,
\eqref{eq3} reduces to \eqref{eq2} for $a=1$.

Let $u>0$ be a smooth positive solution to \eqref{eq3}, and set $v=\frac{p}{p-1}u^{p-1}$. It is straightforward to check that
\begin{equation}
\nabla v=pu^{p-2}\nabla u=(p-1)\frac{v}{u}\nabla u,
\end{equation}
%\begin{equation}
%\Delta v=u^{-1}\Delta u^p - \frac{1}{pu^{p-1}}|\nabla v|^2,
%\end{equation}
and
\begin{equation}\label{eqv}
\frac{\partial v}{\partial t}=(p-1)v\Delta v + |\nabla v|^2 + a(p-1)Rv.
\end{equation}

Define
\begin{equation}
\begin{aligned}
F&= \frac{|\nabla v|^2}{v}-b\frac{v_t}{v} + c\frac{R}{v}\\
&= -b(p-1)\Delta v +(1-b)\frac{|\nabla v|^2}{v}-ab(p-1)R+c\frac{R}{v},
\end{aligned}
\end{equation}
where $b$ and $c$ are arbitrary constants, and $v_t=\frac{\partial v}{\partial t}$.

We also define operator $\mathcal{L}$ by
$$\mathcal{L}=\frac{\partial}{\partial t}-(p-1)v\Delta.$$ Recall that (cf. \cite{LNVV2009})
\begin{equation}\label{quotient}
\mathcal{L}(\frac{f}{g})=\frac{1}{g}\mathcal{L}(f)-\frac{f}{g^2}\mathcal{L}(g)+2(p-1)v\nabla_i(\frac{f}{g})\nabla_i(\ln g).
\end{equation}

Our main purpose in this section is to compute the evolution equation $\mathcal{L}(F)$ of $F$.

\begin{proposition}\label{evolutionF}
We have
\begin{equation}\label{eqevolutionF}
\begin{aligned}
\mathcal{L}(F)= &\  2p\nabla_i F \nabla_i v +\frac{1}{v}(c\frac{\partial R}{\partial t}-2c\nabla_i R\nabla_i v+2(1-b)R_{ij}\nabla_i v \nabla_j v)\\
&\quad\ -(p-1)\left[ (ab+c)\frac{\partial R}{\partial t} -2a\nabla_i R\nabla_i v + 2R_{ij}\nabla_i v\nabla_j v\right]\\
&\quad\ -2(p-1)|\nabla^2 v|^2 -2b(p-1)R_{ij}\nabla_i\nabla_j v+2c(p-1)|Rc|^2\\
&\quad\ -b(p-1)^2(\Delta v)^2 + 2(1-b)(p-1)\frac{|\nabla v|^2}{v}\Delta v - ab(p-1)^2R\Delta v\\
&\quad\ + a(1-b)(p-1)\frac{|\nabla v|^2}{v}R +(1-b)\frac{|\nabla v|^4}{v^2}+c\frac{|\nabla v|^2}{v^2}R - ac(p-1)\frac{R^2}{v}.
\end{aligned}
\end{equation}
\end{proposition}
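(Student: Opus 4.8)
The plan is to prove Proposition \ref{evolutionF} by a direct, if lengthy, computation, organized around the three terms in the definition of $F$. Since $F = \frac{|\nabla v|^2}{v} - b\frac{v_t}{v} + c\frac{R}{v}$, and each summand has the form $\frac{f}{g}$ with $g = v$, the natural first move is to apply the quotient rule \eqref{quotient} to each of the three pieces. This reduces the task to computing $\mathcal{L}(|\nabla v|^2)$, $\mathcal{L}(v_t)$, $\mathcal{L}(R)$, and $\mathcal{L}(v)$, together with assembling the gradient-correction terms $2(p-1)v\,\nabla_i(f/g)\nabla_i(\ln v)$. Note $\mathcal{L}(v) = v_t - (p-1)v\Delta v = |\nabla v|^2 + a(p-1)Rv$ directly from \eqref{eqv}, which is the easiest input.

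The core computations are the following. First, $\mathcal{L}(|\nabla v|^2)$: differentiate $|\nabla v|^2 = g^{ij}\nabla_i v\nabla_j v$ in time, picking up a Ricci term $2R_{ij}\nabla_i v\nabla_j v$ from $\partial_t g^{ij} = 2R^{ij}$, then use $\nabla v_t = \partial_t(\nabla v)$ and the equation \eqref{eqv} for $v_t$; combining with the Bochner formula $\Delta|\nabla v|^2 = 2|\nabla^2 v|^2 + 2\nabla_i v\nabla_i\Delta v + 2R_{ij}\nabla_i v\nabla_j v$ produces the $-2(p-1)|\nabla^2 v|^2$ term and various curvature and lower-order terms. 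Second, $\mathcal{L}(R)$: here one uses the standard evolution $\partial_t R = \Delta R + 2|Rc|^2$ under the Ricci flow, so $\mathcal{L}(R) = \partial_t R - (p-1)v\Delta R$; it is cleaner to keep $\partial_t R$ explicit (as the statement does) rather than substitute, which is why $\frac{\partial R}{\partial t}$ appears unexpanded in \eqref{eqevolutionF}. Third, $\mathcal{L}(v_t)$: differentiate \eqref{eqv} in time, which brings in $\partial_t R$, $\nabla R\cdot\nabla v$ (from commuting $\partial_t$ past $\Delta$, again via the Ricci flow variation of the metric), and second-order terms in $v$. Throughout, one repeatedly substitutes $\Delta v = \frac{1}{(p-1)v}(v_t - |\nabla v|^2 - a(p-1)Rv)$ when convenient to express things in terms of the quantities appearing in $F$, and one must carefully track the metric-variation (Ricci) terms, since the time derivative does not commute with $\nabla$, $\Delta$, or raising indices.

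The main bookkeeping device is that all first-order-in-$F$ contributions should assemble into the advection term $2p\,\nabla_i F\nabla_i v$. Concretely, after applying \eqref{quotient} the gradient corrections give $2(p-1)\nabla_i F\nabla_i v$ plus leftover $\nabla v\cdot\nabla(\text{something})$ terms coming from $\mathcal{L}(|\nabla v|^2)$ and $\mathcal{L}(v_t)$ (terms like $\nabla v\cdot\nabla\Delta v$ and $\nabla v\cdot\nabla|\nabla v|^2$), and the claim is that these recombine to upgrade the coefficient from $2(p-1)$ to $2p$ — this identity is the one place where the algebra has to work out exactly, and it is the step I expect to be the main obstacle, both because it requires recognizing which scattered terms belong to $\nabla F$ and because a sign or factor error here would be invisible until the very end. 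After that matching, the remaining terms are all the genuinely second-order and zeroth-order-in-$F$ quantities — the $|\nabla^2 v|^2$, $R_{ij}\nabla_i\nabla_j v$, $|Rc|^2$, $(\Delta v)^2$, $R\Delta v$, $\frac{|\nabla v|^2}{v}\Delta v$, $\frac{|\nabla v|^4}{v^2}$, etc. — and one simply collects coefficients and checks they agree with \eqref{eqevolutionF}. I would present the computation by first stating the three sub-lemmas (evolutions of $|\nabla v|^2$, $v_t$, $R$ under $\mathcal{L}$), then substituting into the quotient rule and doing the final collection, relegating the most routine index manipulations to a remark that they follow from Bochner's formula and the standard Ricci flow evolution equations.
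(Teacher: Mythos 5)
Your plan is essentially the paper's proof: the same direct computation via the quotient rule \eqref{quotient}, the Bochner formula, and the Ricci flow evolution equations for $g^{ij}$ and $R$, with the $2(p-1)\to 2p$ upgrade of the advection coefficient working out exactly as you predict. The only cosmetic difference is that the paper first eliminates $v_t$ via \eqref{eqv} and computes $\mathcal{L}(\Delta v)$, $\mathcal{L}(|\nabla v|^2/v)$, and $\mathcal{L}(R/v)$ rather than $\mathcal{L}(v_t)$ directly, which is the same calculation reorganized.
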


\begin{proof} First of all, we compute
\begin{align*}
\frac{\partial }{\partial t}\Delta v &= 2R_{ij}\nabla_i \nabla_j v + \Delta \frac{\partial v}{\partial t}\\
&= 2R_{ij}\nabla_i\nabla_j v + (p-1)v\Delta^2 v+(p-1)(\Delta v)^2 + 2(p-1)\nabla_i(\Delta v)\nabla_i v\\
&\quad\ + \Delta|\nabla v|^2 + a(p-1)\Delta(Rv).
%&= 2R_{ij}\nabla_i\nabla_j v + (p-1)(\Delta v)^2 + 2(p-1)\nabla_i v\nabla_i\Delta v +(p-1)v\Delta^2 v+ 2\nabla_i v\nabla_i\Delta v\\
%&\quad\  + 2|\nabla^2 v|^2 + 2R_{ij}\nabla_i v\nabla_j v+ a(p-1)v\Delta R+2a(p-1)\nabla_i R\nabla_i v + a(p-1)R\Delta v\\
%&=(p-1)v\Delta^2 v+ 2p\nabla_i\Delta v \nabla_i v + 2R_{ij}\nabla_i\nabla_j v + (p-1)(\Delta v)^2 + 2|\nabla^2 v|^2\\
%&\quad\  + 2R_{ij}\nabla_i v\nabla_j v+ a(p-1)v\Delta R+2a(p-1)\nabla_i R\nabla_i v + a(p-1)R\Delta v.
\end{align*}
Using the Bochner formula
$$\Delta|\nabla v|^2=2\nabla_i(\Delta v)\nabla_i v + 2|\nabla^2 v|^2 + 2R_{ij}\nabla_i v\nabla_j v,$$
one obtains that
\begin{equation}\label{evolution1}
\begin{aligned}
\mathcal{L}(\Delta v)&=2p\nabla_i(\Delta v) \nabla_i v + 2R_{ij}\nabla_i\nabla_j v + (p-1)(\Delta v)^2 + 2|\nabla^2 v|^2\\
&\quad\  + 2R_{ij}\nabla_i v\nabla_j v+ a(p-1)v\Delta R+2a(p-1)\nabla_i R\nabla_i v + a(p-1)R\Delta v.
\end{aligned}
\end{equation}
On the other hand, since
\begin{align*}
\mathcal{L}(|\nabla v|^2)&=2R_{ij}\nabla_i v\nabla_j v + 2\nabla_i \frac{\partial v}{\partial t}\nabla_i v - (p-1)v\Delta |\nabla v|^2\\
&=2R_{ij}\nabla_i v\nabla_j v + 2(p-1)|\nabla v|^2\Delta v +2\nabla_i|\nabla v|^2\nabla_i v + 2a(p-1)v\nabla_i R\nabla_i v\\
&\quad\ + 2a(p-1)R|\nabla v|^2-2(p-1)v|\nabla^2 v|^2 -2(p-1)vR_{ij}\nabla_iv\nabla_j v,
\end{align*}
it follows from \eqref{quotient} that
\begin{equation}\label{evolution2}
\begin{aligned}
\mathcal{L}(\frac{|\nabla v|^2}{v})%&=\frac{2}{v}R_{ij}\nabla_i v\nabla_j v + 2(p-1)\frac{|\nabla v|^2}{v}\Delta v  + \frac{2p}{v}\nabla_i |\nabla v|^2\nabla_i v- (2p-1)\frac{|\nabla v|^4}{v^2}\\
%&\quad \ + 2a(p-1)\nabla_i R\nabla_i v + a(p-1)\frac{|\nabla v|^2}{v}R-2(p-1)|\nabla^2 v|^2 -2(p-1)R_{ij}\nabla_i v\nabla_j v\\
&= 2p\nabla_i \frac{|\nabla v|^2}{v}\nabla_i v+\frac{2}{v}R_{ij}\nabla_i v\nabla_j v + 2(p-1)\frac{|\nabla v|^2}{v}\Delta v  + \frac{|\nabla v|^4}{v^2}\\
&\quad \ + 2a(p-1)\nabla_i R\nabla_i v + a(p-1)\frac{|\nabla v|^2}{v}R-2(p-1)|\nabla^2 v|^2\\
&\quad\ -2(p-1)R_{ij}\nabla_i v\nabla_j v.
\end{aligned}
\end{equation}

Finally, from \eqref{eqv} and the evolution equation
\begin{equation}\label{evolution3}
\frac{\partial R}{\partial t}=\Delta R + 2|Rc|^2
\end{equation}
of the scalar curvature $R$ under the Ricci flow,  we have
\begin{equation}\label{evolution4}
\begin{aligned}
\mathcal{L}(\frac{R}{v})%&=[\frac{\partial}{\partial t}-(p-1)v\Delta]\frac{R}{v}\\
%&=\frac{1}{v}\frac{\partial R}{\partial t} - (2p-1)\frac{|\nabla v|^2}{v^2}R - a(p-1)\frac{R^2}{v} -(p-1)\Delta R + \frac{2(p-1)}{v}\nabla_iR\nabla_iv\\
&=2p\nabla_i\frac{R}{v}\nabla_i v + \frac{|\nabla v|^2}{v^2}R - \frac{2}{v}\nabla_i R\nabla_i v+ \frac{1}{v}\frac{\partial R}{\partial t} - a(p-1)\frac{R^2}{v}\\
&\quad\  -(p-1)\frac{\partial R}{\partial t} + 2(p-1)|Rc|^2.
\end{aligned}
\end{equation}
Now, evolution equation \eqref{eqevolutionF} can be obtained directly from \eqref{evolution1}, \eqref{evolution2}, \eqref{evolution3}, and \eqref{evolution4}.
\end{proof}

Next, let us take $a=1$ so that equation \eqref{eq3} reduces to our equation \eqref{eq2} and further set $c=1-b$. Then, we have
\begin{align}\label{f=y-bz}
F&=\frac{|\nabla v|^2}{v}-b\frac{v_t}{v}+(1-b)\frac{R}{v}\\
&=y-bz,
\end{align}
where
$$y=\frac{|\nabla v|^2}{v}+\frac{R}{v}\qquad \textrm{and} \qquad z=\frac{v_t}{v}+\frac{R}{v}.$$

Note that in the special case $b=1$ (hence $c=0$), we have
$$y-z=-(p-1)\Delta v-(p-1)R$$
in view of \eqref{eqv}.

For $a=1$ and $c=1-b$ as discussed above, Proposition \ref{evolutionF} can be further simplified as follows:

\begin{proposition} Suppose $u$ is a smooth positive solution to \eqref{eq2} and $v=\frac{p}{p-1}u^{p-1}$. Then, for any constant $b$, the quantity
$$F=\frac{|\nabla v|^2}{v}-b\frac{v_t}{v}+(1-b)\frac{R}{v}$$
satisfies the equation
\begin{equation}\label{eq4}
\begin{aligned}
\mathcal{L}(F)&=2p\nabla_i F\nabla_iv-[\frac{b-1}{v}+p-1](\frac{\partial R}{\partial t}-2\nabla_i R\nabla_i v+2R_{ij}\nabla_i v \nabla_j v)\\
&\quad\ -2(p-1)|\nabla^2 v+\frac{b}{2}Rc|^2+\frac{(b-2)^2}{2}(p-1)|Rc|^2-\frac{1}{b}F^2\\
&\quad\ -[(p-1)R+\frac{2(b-1)}{b}\frac{R}{v}]F -\frac{b-1}{b}y^2-\frac{(b-1)(b-2)}{b} y \frac{R}{v}.
\end{aligned}
\end{equation}
\end{proposition}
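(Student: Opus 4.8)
The plan is to obtain \eqref{eq4} directly from Proposition~\ref{evolutionF} by substituting $a=1$ and $c=1-b$ into \eqref{eqevolutionF} and then reorganizing the right-hand side into the three blocks that appear in \eqref{eq4}: a curvature--reaction block built from $\partial_t R$, $\nabla_i R\nabla_i v$ and $R_{ij}\nabla_i v\nabla_j v$; a quadratic curvature block built from $\nabla^2 v$ and $|Rc|^2$; and a remaining algebraic block, which will be re-expressed through $F$ and $y$. The transport term $2p\nabla_i F\nabla_i v$ passes through unchanged.

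For the first block, $a=1$ and $c=1-b$ give $ab+c=1$, so the parenthesized factor on the first line of \eqref{eqevolutionF} becomes equal to the one on the second line; carrying along the coefficients $\tfrac{1-b}{v}$ and $-(p-1)$ and adding yields $-\bigl[\tfrac{b-1}{v}+p-1\bigr]\bigl(\partial_t R-2\nabla_i R\nabla_i v+2R_{ij}\nabla_i v\nabla_j v\bigr)$, the first line of \eqref{eq4}. For the second block, I would split $2c(p-1)|Rc|^2=2(1-b)(p-1)|Rc|^2$ as $-\tfrac{b^2}{2}(p-1)|Rc|^2+\bigl(2(1-b)+\tfrac{b^2}{2}\bigr)(p-1)|Rc|^2$; the first summand combines with $-2(p-1)|\nabla^2 v|^2-2b(p-1)R_{ij}\nabla_i\nabla_j v$ into $-2(p-1)\bigl|\nabla^2 v+\tfrac b2 Rc\bigr|^2$, and since $2(1-b)+\tfrac{b^2}{2}=\tfrac{(b-2)^2}{2}$ the second summand is exactly the $|Rc|^2$ term on the second line of \eqref{eq4}.

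The main work is the third block: with $a=1$ and $c=1-b$, the last two lines of \eqref{eqevolutionF} must be shown to equal $-\tfrac1b F^2-\bigl[(p-1)R+\tfrac{2(b-1)}{b}\tfrac{R}{v}\bigr]F-\tfrac{b-1}{b}y^2-\tfrac{(b-1)(b-2)}{b}\,y\tfrac{R}{v}$. The idea is to trade $\Delta v$ and $\tfrac{|\nabla v|^2}{v}$ for $F$ and $y$. From \eqref{eqv} one has $\tfrac{v_t}{v}=(p-1)\Delta v+\tfrac{|\nabla v|^2}{v}+(p-1)R$, hence $z=\tfrac{v_t}{v}+\tfrac{R}{v}=(p-1)(\Delta v+R)+y$; together with $F=y-bz$ this gives the two identities $(p-1)(\Delta v+R)=\tfrac{(1-b)y-F}{b}$ and $\tfrac{|\nabla v|^2}{v}=y-\tfrac{R}{v}$. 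Substituting these into the third block, using $\Delta v(\Delta v+R)=(\Delta v+R)^2-R(\Delta v+R)$ to remove the $(\Delta v)^2$ and $R\Delta v$ terms and $\Delta v=(\Delta v+R)-R$ in the mixed term $\tfrac{|\nabla v|^2}{v}\Delta v$, and then collecting monomials in $F$, $y$, $R$ and $\tfrac{R}{v}$, one checks that the $yF$-- and $Ry$--coefficients both vanish and that the surviving coefficients are $-\tfrac1b$ (for $F^2$), $-(p-1)$ (for $RF$), $-\tfrac{2(b-1)}{b}$ (for $\tfrac{R}{v}F$), $-\tfrac{b-1}{b}$ (for $y^2$), and $-\tfrac{(b-1)(b-2)}{b}$ (for $y\tfrac{R}{v}$), which is precisely the last two lines of \eqref{eq4}.

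The only genuinely delicate step is this final regrouping: after substitution the third block becomes a fairly large polynomial in $F$, $y$, $R$ and $\tfrac Rv$, and the cancellations that collapse it to the compact form in \eqref{eq4} rely on the bookkeeping identities $y=\tfrac{|\nabla v|^2}{v}+\tfrac{R}{v}$ and $b-2(b-1)=-(b-2)$, so keeping the signs straight between $1-b$ and $b-1$ is where an error is most likely to creep in. Everything else is immediate from Proposition~\ref{evolutionF} and the definitions of $F$, $y$, $z$, so once this algebraic identity is verified the proposition follows.
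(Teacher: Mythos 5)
Your proposal is correct and follows essentially the same route as the paper: both start from Proposition~\ref{evolutionF} with $a=1$, $c=1-b$, note that $ab+c=1$ collapses the two curvature--reaction lines into the single $-[\tfrac{b-1}{v}+p-1]$ factor, complete the square to form $-2(p-1)|\nabla^2 v+\tfrac b2 Rc|^2+\tfrac{(b-2)^2}{2}(p-1)|Rc|^2$, and reduce the remaining terms to the quadratic expression in $F$, $y$, $R$, $\tfrac Rv$. The only (cosmetic) difference is in the last block: the paper keeps $a,c$ general and iteratively completes terms into $F$ directly in the variables $\Delta v$ and $\tfrac{|\nabla v|^2}{v}$, whereas you substitute $(p-1)(\Delta v+R)=\tfrac{(1-b)y-F}{b}$ first and collect monomials; I have checked that your coefficient bookkeeping (vanishing of the $yF$ and $Ry$ terms, and the stated coefficients of $F^2$, $RF$, $\tfrac Rv F$, $y^2$, $y\tfrac Rv$) is correct.
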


\begin{proof} It suffices to observe that
\begin{align*}
&-b(p-1)^2(\Delta v)^2 + 2(1-b)(p-1)\frac{|\nabla v|^2}{v}\Delta v - ab(p-1)^2R\Delta v\\
\quad\ & + a(1-b)(p-1)\frac{|\nabla v|^2}{v}R +(1-b)\frac{|\nabla v|^4}{v^2}+c\frac{|\nabla v|^2}{v^2}R - ac(p-1)\frac{R^2}{v}\\
=& (p-1)\Delta v\cdot F+(1-b)(p-1)\frac{|\nabla v|^2}{v}\Delta v-c(p-1)\frac{R}{v}\Delta v + a(1-b)(p-1)\frac{|\nabla v|^2}{v}R\\
\quad\ &+(1-b)\frac{|\nabla v|^4}{v^2}+c\frac{|\nabla v|^2}{v^2}R -ac(p-1)\frac{R^2}{v}\\
=& (p-1)\Delta v\cdot F - \frac{1-b}{b}\frac{|\nabla v|^2}{v}F-c(p-1)\frac{R}{v}\Delta v+ [1-b+\frac{(1-b)^2}{b}]\frac{|\nabla v|^4}{v^2}\\
\quad\ & + [c+\frac{c(1-b)}{b}]\frac{|\nabla v|^2}{v^2}R -ac(p-1)\frac{R^2}{v}\\
=& [(p-1)\Delta v-\frac{1-b}{b}\frac{|\nabla v|^2}{v}]F - \frac{c}{b}\frac{R}{v}F-2c(p-1)\frac{R}{v}\Delta v + [c + \frac{2(1-b)c}{b}]\frac{|\nabla v|^2}{v^2}R\\
\quad\ & - 2ac(p-1)\frac{R^2}{v}+ \frac{c^2}{b}\frac{R^2}{v^2} + \frac{1-b}{b}\frac{|\nabla v|^4}{v^2}\\
=& -\frac{1}{b}[F+ab(p-1)R]F + \frac{2c}{b}\frac{R}{v}F-\frac{c^2}{b}\frac{R^2}{v^2}+c\frac{|\nabla v|^2}{v^2}R+ \frac{1-b}{b}\frac{|\nabla v|^4}{v^2}\\
=& -\frac{1}{b}F^2-[a(p-1)R-\frac{2c}{b}\frac{R}{v}]F -\frac{c^2}{b}\frac{R^2}{v^2}+c\frac{|\nabla v|^2}{v^2}R + \frac{1-b}{b}\frac{|\nabla v|^4}{v^2}.
\end{align*}
Now \eqref{eq4} follows easily by plugging in $a=1$ and $c=1-b$.
\end{proof}

To conclude this section, we recall the following important trace Li-Yau-Hamilton estimate of Hamilton \cite{Ham1993jdg} for the Ricci flow which will be crucial in the proofs of our Aronson-B\'enilan and Li-Yau-Hamilton estimates in Section 3.

\begin{theorem}[\textbf{Hamilton \cite{Ham1993jdg}}]\label{matrix harnack}
Let $(M^n, g_{ij}(t))$, $t\in[0, T)$, be a complete solution to the Ricci flow with bounded and nonnegative curvature operator, then for any 1-form $V_i$ on $M^n$, we have
$$\frac{\partial R}{\partial t} +2\nabla_i R V_i + 2R_{ij}V_iV_j + \frac{R}{t}\geq0$$
for $t\in(0,T)$.
\end{theorem}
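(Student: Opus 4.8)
The quantity on the left of the asserted inequality is precisely the function $F$ of Section~2 corresponding to $a=1$ and $c=1-b$ (so that $c\tfrac{R}{v}=(1-b)\tfrac{R}{v}=-(b-1)\tfrac{R}{v}$), and therefore $F$ obeys the evolution equation \eqref{eq4}. I would prove the estimate by a maximum principle on $G:=tF$ over $M\times(0,T_0]$ for each fixed $T_0<T$. When $M$ is noncompact I would first localize on a parabolic cylinder using a Li--Yau type spatial cutoff $\phi$, exploiting the bounded curvature of $g(t)$ and the boundedness of $u$ to obtain interior control of $v,\nabla v,\nabla^2v$; the terms generated by $\nabla\phi,\Delta\phi$ are dominated via Cauchy--Schwarz and vanish as the cutoff radius $\to\infty$. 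Put $\tau:=d=\max\{b\alpha,\tfrac b2\}$. If $G\le d$ on all of $M\times(0,T_0]$ we are done at time $T_0$; otherwise $G$ attains a value $>d$ at an interior point $(x_0,t_0)$, at which $\nabla F=0$, $\Delta F\le0$, $\partial_tF\ge-F/t$, hence $\mathcal L(F)\ge-F/t$, and moreover $F(x_0,t_0)>d/t_0\ge\tfrac b{2t_0}$.

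At $(x_0,t_0)$ the term $2p\nabla_iF\nabla_iv$ in \eqref{eq4} drops out, and I would estimate the rest by three facts. First, Hamilton's trace estimate (Theorem~\ref{matrix harnack}) with $V_i=-\nabla_iv$ gives $\tfrac{\partial R}{\partial t}-2\nabla_iR\nabla_iv+2R_{ij}\nabla_iv\nabla_jv\ge-R/t$; since the coefficient $-(\tfrac{b-1}{v}+p-1)$ of this bracket in \eqref{eq4} is nonpositive, the bracket contributes at most $(\tfrac{b-1}{v}+p-1)\tfrac Rt$. Second, $|\nabla^2v+\tfrac b2Rc|^2\ge\tfrac1n(\Delta v+\tfrac b2R)^2$, and from the definition of $F$ and \eqref{eqv} one gets $\Delta v=-\tfrac{F+(b-1)y}{b(p-1)}-R$, whence $\Delta v+\tfrac b2R=-\tfrac{F+(b-1)y}{b(p-1)}+\tfrac{b-2}2R$; expanding the square produces a purely $(F,y)$-quadratic, a cross term $\tfrac{2(b-2)}{nb}R(F+(b-1)y)$, and $-\tfrac{(p-1)(b-2)^2}{2n}R^2$. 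Third, nonnegativity of the curvature operator gives $Rc\ge0$, so $|Rc|^2\le R^2\le R_{max}R$, which together with the preceding $R^2$-term contributes $+\tfrac{(b-2)^2(p-1)(n-1)}{2n}R^2$. Collecting, the purely quadratic part is $-Q(F,y)$ with $Q(F,y)=\tfrac1bF^2+\tfrac{b-1}{b}y^2+\tfrac2{nb^2(p-1)}(F+(b-1)y)^2$.

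The key observation is that $y=\tfrac{|\nabla v|^2+R}{v}\ge0$ (as $v>0$ and $R\ge0$): for $F>0$ all the cross and $y^2$ contributions to $Q$ are nonnegative, so
\begin{equation*}
Q(F,y)\ \ge\ \Bigl(\tfrac1b+\tfrac2{nb^2(p-1)}\Bigr)F^2\ =\ \tfrac1{b\alpha}F^2 ,
\end{equation*}
which is exactly where $\alpha=\tfrac{bn(p-1)}{2+bn(p-1)}$ enters; I would keep the leftover $y$-quadratic in $Q$ in reserve. Next, using $F>\tfrac b{2t}$ (and, in the case $1\le b<2$, also the extra favorable piece $\tfrac{2(2-b)}{nb}RF$ coming from the cross term), the favorable terms $(p-1)RF$ and $\tfrac{2(b-1)}{b}\tfrac{R}{v}F$ of \eqref{eq4} dominate and absorb the awkward $(p-1)\tfrac Rt$ and $\tfrac{b-1}{v}\tfrac Rt$ produced by Hamilton's estimate; this is where the dichotomy threshold $d$ is used, distinguishing $\alpha\ge\tfrac12$ (then $d=b\alpha$) from $\alpha<\tfrac12$ (then $d=\tfrac b2$, and $\alpha<\tfrac12$ forces $p\le1+\tfrac2n$, which makes the absorption go through). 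The residual terms in $y$, namely $\pm\tfrac{(b-1)(b-2)}{b}\tfrac{R}{v}y$ and the $Ry$-part of the cross term, are controlled using $\tfrac{R}{v}\le y$, $R\le R_{max}$ and Young's inequality against the reserve. One is left with a quadratic inequality of the form
\begin{equation*}
\tfrac1{b\alpha}F^2\ \le\ F\Bigl(\tfrac1t+\gamma|b-2|R_{max}\Bigr)+\delta|b-2|^2R_{max}^2 ,
\end{equation*}
with $\gamma,\delta\ge0$ explicit and $\gamma=0$ when $1\le b<2$; solving and using $\sqrt{a^2+z}\le a+\sqrt z$ yields $F\le\tfrac{b\alpha}t+(b\alpha\gamma+\sqrt{b\alpha\delta})|b-2|R_{max}$, and the estimates above are to be arranged so that $b\alpha\gamma+\sqrt{b\alpha\delta}=C_0$ in each case.

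Combining the two alternatives of the dichotomy, $F\le\max\{\tfrac b{2t},\ \tfrac{b\alpha}t+C_0|b-2|R_{max}\}\le\tfrac dt+C_0|b-2|R_{max}$ at $(x_0,t_0)$, hence throughout $M\times(0,T_0]$; letting $T_0\uparrow T$ and the cutoff radius tend to infinity completes the proof. I expect the main difficulty to be the bookkeeping in the third paragraph: choosing the Young's inequalities and the precise way $F>d/t$, $\tfrac{R}{v}\le y$ and the reserve $y$-quadratic are used so that the accumulated constants assemble into exactly $C_0$ (a delicate point, since the naive estimates lose factors and only yield a weaker constant), along with making the noncompact localization rigorous, which requires interior gradient and Hessian bounds for $v$ available from the bounded geometry of the flow and the boundedness of $u$.
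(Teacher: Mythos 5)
Your proposal does not prove the stated theorem; it proves something else. The statement is Hamilton's \emph{trace Li--Yau--Hamilton estimate for the Ricci flow itself}: the quantity on the left is $\partial_t R + 2\nabla_i R\, V_i + 2R_{ij}V_iV_j + R/t$, involving only the evolving scalar curvature $R$, the Ricci tensor, and an arbitrary $1$-form $V_i$. It has nothing to do with a solution $u$ of the porous medium equation, with $v=\tfrac{p}{p-1}u^{p-1}$, or with the quantity $F=\tfrac{|\nabla v|^2}{v}-b\tfrac{v_t}{v}+c\tfrac{R}{v}$ of Section~2. Your opening sentence, ``the quantity on the left of the asserted inequality is precisely the function $F$ of Section~2,'' is therefore simply a misidentification, and the entire argument that follows is a sketch of the paper's proof of Theorem~\ref{harnack PME} for the PME, not a proof of Theorem~\ref{matrix harnack}.

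The circularity makes this unmistakable: in the second paragraph you invoke ``Hamilton's trace estimate (Theorem~\ref{matrix harnack}) with $V_i=-\nabla_i v$'' as a known tool. You cannot use the theorem you are supposed to prove as an input to its own proof. In fact, the paper does not prove Theorem~\ref{matrix harnack} at all; it is quoted directly from Hamilton \cite{Ham1993jdg} and used as a black box. Hamilton's actual proof is of a completely different nature: one first establishes a \emph{matrix} Harnack inequality for a specific curvature-built $3$-tensor/$2$-tensor expression by showing it satisfies a reaction--diffusion inequality preserved under nonnegative curvature operator, and the trace inequality above follows by contracting. If you genuinely need a proof of Theorem~\ref{matrix harnack}, you must reproduce that argument (or Brendle's extension under nonnegative isotropic curvature of $M\times\mathbb{R}^2$); the maximum-principle scheme for $G=tF$ with $F$ built from $v$ is the wrong object and the wrong evolution equation.
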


\bigskip

\section{Aronson-B\'enilan and Li-Yau-Hamilton Estimates for PME}

In this section, we prove Aronson-B\'enilan and Li-Yau-Hamilton estimates for smooth positive solutions to PME \eqref{eq2} ($p>1$)
under the Ricci flow \eqref{rf} on complete manifold $M^n$ with bounded and nonnegative curvature operator so that Theorem \ref{matrix harnack} applies.

Since $M^n$ may be noncompact, in order to apply the maximum principle argument, we first show a local differential Harnack estimate for $v=\frac{p}{p-1}u^{p-1}$ in some parabolic cylinder $\coprod_{t\in[0, T]}B_t(O, R_0)\times\{t\}$ for any fixed point $O \in M^n$ and any positive constant $R_0>0$, where $B_t(O, R_0)$ denotes the geodesic ball centered at $O$ with radius $R_0$ under the metric $g_{ij}(t)$. Letting the radius $R_0\rightarrow \infty$ leads to a rough global differential Harnack estimate, which gives certain bound on the quantity that we are considering. This rough global differential Harnack estimate in turn enables us to adopt a method of Hamilton in \cite{Ham1993jdg} to derive the desired estimate. It turns out necessary for us to treat the cases of $b\geq2$ and $b<2$ separately.

\subsection{Case 1: $b\geq2$}

In this case, we first need the following local differential Harnack estimate.

\begin{proposition}\label{local harnack p>1 b>2}
Let $(M^n, g_{ij}(t))$, $t\in[0, T]$, be a complete solution to the Ricci flow with bounded and nonnegative curvature operator. Suppose $u$ is a smooth positive solution to \eqref{eq2} with $p>1$ and $v=\frac{p}{p-1}u^{p-1}$.  Then,  for any point $O\in M$ and any constants $R_0>0$ and $b\in[2,\infty)$, we have
$$\frac{|\nabla v|^2}{v}-b\frac{v_t}{v}-(b-1)\frac{R}{v}-\frac{d}{t}\leq b\alpha\left[ \frac{C_1v_{max}}{R_0^2}+C_2R_{max}\right]+R_{max}(b-2)\sqrt{\frac{b(p-1)\alpha}{2}}$$
on $\coprod_{t\in(0, T]}B_t(O, R_0)\times\{t\}$, where $\alpha=\frac{bn(p-1)}{2+bn(p-1)}$, $d=\max\{b\alpha, \frac{b}{2}\}$, $C_1=(8n+128+\frac{16nb^2p^2}{b-1})(p-1)$, $C_2= 16+\frac{2(b-2)}{bn}$, $v_{max}=\max_{\coprod_{t\in[0, T]}B_t(O, 2R_0)\times\{t\}}v$ and $R_{max}=\sup_{M\times[0,T]}R$.
\end{proposition}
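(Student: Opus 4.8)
The plan is a localized maximum-principle argument for a multiple of
$$F=\frac{|\nabla v|^2}{v}-b\frac{v_t}{v}+(1-b)\frac{R}{v},$$
using the evolution identity \eqref{eq4} as the engine, Hamilton's trace Li-Yau-Hamilton estimate (Theorem \ref{matrix harnack}) to dispose of the curvature-derivative terms, and a space-time cutoff to handle the noncompactness of $M^n$. Fix the point $(x_*,t_*)$, $x_*\in B_{t_*}(O,R_0)$, $t_*\in(0,T]$, at which the estimate is to be proved, let $\mu$ denote the claimed error constant $b\alpha[C_1v_{max}/R_0^2+C_2R_{max}]+(b-2)R_{max}\sqrt{b(p-1)\alpha/2}$, and choose a cutoff $\phi=\phi(x,t)$ supported in $\coprod_{t\le t_*}B_t(O,2R_0)\times\{t\}$, identically $1$ on $\coprod_{t\le t_*}B_t(O,R_0)\times\{t\}$, with $|\nabla\phi|^2\le C\phi/R_0^2$, $|\Delta\phi|\le C/R_0^2$, $|\partial_t\phi|\le C/R_0^2$; such a $\phi$ exists because the curvature is bounded, by the Laplacian comparison theorem together with the standard estimate for the time-distortion of distances under the flow, and tracking how $p-1,b,n$ enter these bounds produces $C_1,C_2$. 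Set $H=t\phi F$ on this sub-cylinder; it is continuous and bounded (as $v>0$ there, so $v$ has a positive lower bound on the compact closure) and vanishes on the parabolic boundary. Suppose the estimate fails at $(x_*,t_*)$, i.e. $H(x_*,t_*)>d+t_*\mu$; then $H$ attains its maximum at an interior point $(x_0,t_0)$ with $0<t_0\le t_*$ and $H(x_0,t_0)\ge H(x_*,t_*)>d\ge b/2$, where $\nabla H=0$ (so $\phi\nabla F=-F\nabla\phi$ and $F(x_0,t_0)>0$), $\Delta H\le0$ and $\partial_tH\ge0$, hence $\mathcal L(H)\ge0$ for $\mathcal L=\partial_t-(p-1)v\Delta$.

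At $(x_0,t_0)$ expand $\mathcal L(H)=\phi F+tF\mathcal L(\phi)+t\phi\,\mathcal L(F)-2(p-1)vt\,\nabla\phi\cdot\nabla F$, substitute \eqref{eq4}, replace $\nabla F$ by $-(F/\phi)\nabla\phi$, and apply Theorem \ref{matrix harnack} with $V_i=-\nabla_iv$, which gives $\partial_tR-2\nabla_iR\nabla_iv+2R_{ij}\nabla_iv\nabla_jv\ge -R/t$; since $\tfrac{b-1}{v}+p-1>0$ for $b\ge2$, the corresponding term in \eqref{eq4} contributes at most $\left(\tfrac{b-1}{v}+p-1\right)\phi R$. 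Next, writing $y=\tfrac{|\nabla v|^2}{v}+\tfrac{R}{v}\ge0$ and, from \eqref{eqv}, $\Delta v=-\tfrac{1}{b(p-1)}\left(F+(b-1)y+b(p-1)R\right)$, and using the trace inequality $|\nabla^2v+\tfrac{b}{2}Rc|^2\ge\tfrac1n\big(\Delta v+\tfrac{b}{2}R\big)^2$, the two $F^2$-contributions — $-\tfrac1bF^2$ in \eqref{eq4} and the $F^2$-part of $-\tfrac{2(p-1)}{n}\big(\Delta v+\tfrac{b}{2}R\big)^2$ — combine to exactly $-\tfrac1{b\alpha}F^2$ with $\alpha=\tfrac{bn(p-1)}{2+bn(p-1)}$, which is the origin of the constant $\alpha$.

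For $b\ge2$ one then checks term by term that: (i) all $y^2$-terms and the $Fy$ cross term are $\le0$ (since $F,y\ge0$ and $b-1>0$); (ii) the $R/v$-terms and the $(p-1)RF$-term of \eqref{eq4}, together with the positive $\left(\tfrac{b-1}{v}+p-1\right)\phi R$ from Theorem \ref{matrix harnack}, are $\le0$ on $\{H\ge b/2\}$ and hence at $(x_0,t_0)$ — so no lower bound on $v$ ever enters; (iii) the surviving positive terms are the cutoff/gradient terms, which by the bounds on $\phi$ and Cauchy-Schwarz/Young estimates are $O(t\,v_{max}/R_0^2)$, together with $\tfrac{(b-2)^2}{2}(p-1)|Rc|^2\le\tfrac{(b-2)^2}{2}(p-1)R^2$ and an $RF$ cross term from the $\tfrac{b-2}{2}R$ summand in $\Delta v+\tfrac{b}{2}R$; this last $RF$-term is absorbed by completing a square with part of $-\tfrac1{b\alpha}F^2$ — which keeps the sharp coefficient $1/(b\alpha)$ — at the cost of an $O(tR_{max})$-type term, while the $|Rc|^2$-term is carried forward.

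Feeding all of this into $\mathcal L(H)\ge0$, using $0<\phi\le1$, and multiplying through by $b\alpha t\phi$ produces at $(x_0,t_0)$ a quadratic inequality of the form $H^2\le b\alpha\phi H+b\alpha\big[Cv_{max}/R_0^2+CR_{max}\big]tH+\tfrac{b\alpha(b-2)^2(p-1)}{2}R_{max}^2t^2+\cdots$; solving it — the square root of the $R_{max}^2t^2$-term being exactly $(b-2)R_{max}t\sqrt{b(p-1)\alpha/2}$ — gives $H(x_0,t_0)\le d+t_0\mu$ with $d=\max\{b\alpha,b/2\}$, the two cases of the maximum coming from the quadratic and from the threshold $b/2$ used in (ii). Since $t_0\le t_*$, this yields $H(x_0,t_0)\le d+t_*\mu$, contradicting $H(x_0,t_0)\ge H(x_*,t_*)>d+t_*\mu$. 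Hence $H(x_*,t_*)\le d+t_*\mu$, i.e. $t_*F(x_*,t_*)\le d+t_*\mu$ since $\phi(x_*,t_*)=1$, which upon dividing by $t_*$ is the asserted estimate. I expect the main obstacle to be the bookkeeping in (iii): verifying that for $b\ge2$ every term that survives the substitution of $\Delta v$ and the trace inequality is manifestly $\le0$, conditionally $\le0$ on $\{H\ge b/2\}$, or absorbable into an error of exactly the stated form — in particular pinning down the sharp coefficient $1/(b\alpha)$ and the precise $C_0$-type constant; the cutoff construction and the maximum-principle mechanics are routine, apart from the mild Ricci-flow twist in bounding $\partial_t\phi$.
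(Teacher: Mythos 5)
Your proposal follows essentially the same route as the paper's proof: a localized maximum principle for $t\phi F$ (the paper works with $H=t\phi F-d$ directly rather than by contradiction), Hamilton's trace Li--Yau--Hamilton estimate with $V=-\nabla v$ to control the curvature-derivative terms, the trace inequality together with the substitution for $\Delta v$ from \eqref{eqv} to merge the two quadratic terms into $-\tfrac{1}{b\alpha}F^2$, and completion of squares to produce the stated error constants. Two bookkeeping points to repair when writing it out: the cross term $-\tfrac{4(b-1)}{b^2n(p-1)}\,yF$ cannot simply be discarded as in your step (i), since it is precisely what absorbs, via Young's inequality, the factor $\tilde{y}^{1/2}$ in the cutoff-gradient term $\tfrac{16p}{R_0}\tilde{y}^{1/2}(tv)^{1/2}F$ (this is the origin of the $\tfrac{16nb^2p^2}{b-1}$ in $C_1$, which is not attributable to the cutoff construction alone); and the correct bound on the time derivative of the cutoff is $\partial_t\phi\leq 16R_{max}$ coming from $\partial_t r\geq -R_{max}r$, not $|\partial_t\phi|\leq C/R_0^2$, which is where the $16$ in $C_2$ arises.
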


\begin{proof}
Choose a smooth cut-off function $\eta(s)$ defined for $s\geq0$ such that
$\eta(s)=1$ for $0\leq s \leq \frac{1}{2}$, $\eta(s)=0$ for $s\geq 1$, $\eta(s)>0$ for $\frac{1}{2}<s<1$, $-16\eta^{\frac{1}{2}}\leq \eta^{\prime}\leq 0$ and $\eta^{\prime\prime}\geq -16\eta\geq -16$. For any fixed point $O\in M$ and any positive number $R_0>0$, we define
\begin{equation}\label{eqphi}
\phi(x,t)=\eta\left(\frac{r(x,t)}{2R_0}\right)
\end{equation}
on $\coprod_{t\in[0, T]} B_{t}(O, 2R_0)\times\{t\}$, where $r(x,t)$ is the distance function from $O$ at time $t$.

Set $H=t\phi F -d$ for some constant $d\geq b/2$ to be chosen later, $\tilde{y}=t\phi y$, and $\tilde{z}=t\phi z$. Then, by \eqref{f=y-bz} we have  $$t\phi F=\tilde{y}-b\tilde{z},$$
and it follows from Proposition 2.2 %\eqref{eq4}
that
\begin{align*}
&t\phi\mathcal{L}(H)\\
%=&t\phi\mathcal{L}(t\phi F)\\
=&t\phi^2F+t^2\phi F\phi_t-(p-1)t^2v\phi F\Delta\phi - 2(p-1)t^2v\phi\nabla_i\phi\nabla_i F + t^2\phi^2\mathcal{L}(F)\\
= &\phi(\tilde{y}-b\tilde{z})+t\phi_t(\tilde{y}-b\tilde{z})-(p-1)tv\Delta\phi(\tilde{y}-b\tilde{z}) - 2(p-1)t^2v\phi\nabla_i\phi\nabla_i F\\
\quad\ &+
2p t^2\phi^2\nabla_i F\nabla_iv-t\phi^2[\frac{b-1}{v}+p-1]Q-(d-\phi)(p-1)t\phi R-(b-1)[\frac{2d}{b}-\phi]t\phi \frac{R}{v} \\
\quad\ &-2(p-1)t^2\phi^2|\nabla^2 v+\frac{b}{2}Rc|^2+\frac{(b-2)^2}{2}(p-1)t^2\phi^2|Rc|^2-\frac{1}{b}(\tilde{y}-b\tilde{z})^2\\
\quad\ &-[(p-1)R+\frac{2(b-1)}{b}\frac{R}{v}]t\phi H -\frac{b-1}{b}\tilde{y}^2-\frac{(b-1)(b-2)}{b}\cdot t\phi\frac{R}{v}\tilde{y},
\end{align*}
where we denote by \begin{equation}\label{eqQ}Q=t\frac{\partial R}{\partial t}+R-2t\nabla_i R\nabla_i v+2tR_{ij}\nabla_i v \nabla_j v.\end{equation}

From the definition of $\phi$, it is easy to see that
$$|\nabla \phi|=\frac{1} {2R_0} |\eta^{\prime}| |\nabla r(x,t)|\leq \frac{8}{R_0}\phi^{\frac{1}{2}},$$
and
\begin{align*}
\Delta\phi&= \frac{\eta^{\prime\prime}}{4R_0^2}+\eta^{\prime}\frac{\Delta r}{2R_0}\geq - \frac{4}{R_0^2}-\frac{8(n-1)}{R^2_0}=-\frac{8n}{R^2_0},
\end{align*}
where we have used the Laplacian Comparison Theorem for $\Delta r$ (see e.g. \cite{ScYa1994}) in the above. On the other hand, since along the Ricci flow we have (see e.g. Lemma B.40 in \cite{CLN2006} or Lemma 2.3.3 in \cite{CaZh2006})
$$\frac{\partial r}{\partial t}=-\inf_{\gamma}\int_{\gamma}Rc(\dot{\gamma}(s),\dot{\gamma}(s))ds\geq -R_{max}r(x, t), $$
with the infimum taking over all the shortest geodesics connecting $O$ and $x$ at time $t$, it implies that
\begin{align*}
\frac{\partial \phi}{\partial t} &= \frac{\eta^{\prime}}{2R_0}\frac{\partial r}{\partial t}\leq 16R_{max}.
\end{align*}

%$$|\frac{\partial \phi}{\partial t}|\leq 16R_{max},\qquad \Delta\phi\geq-\frac{8n}{R^2_0}, \qquad |\nabla \phi|\leq \frac{8}{R_0}\phi^{\frac{1}{2}}.$$

If $H\leq 0$ in $\coprod_{t\in[0, T]}B_t(O, 2R_0)\times\{t\}$, then clearly the estimate that we seek is automatically true. Otherwise, since by definition $H\leq 0$ on the parabolic boundary of $\coprod_{t\in[0, T]}B_t(O, 2R_0)\times\{t\}$, we may assume that $H$ achieves its positive maximum on the parabolic cylinder $\coprod_{t\in[0, T]}B_t(O, 2R_0)\times\{t\}$ at some time $t_0>0$ and some interior point $x_0$. Then, at $(x_0, t_0)$, one has
$$\tilde{y}-b\tilde{z}=H(x_0,t_0)+d>0,\ F\nabla\phi=-\phi\nabla F, \  \textrm{and}\ \mathcal{L}(H)(x_0,t_0)\geq0.$$
Moreover, since
$$2p t_0^2\phi^2\nabla_i F\nabla_iv=- 2p t_0^2\phi F \nabla_i\phi\nabla_i v\leq 2p t_0^2\phi F|\nabla\phi||\nabla v|\leq \frac{16p}{R_0}\tilde{y}^{\frac{1}{2}}(t_0v)^{\frac{1}{2}}(\tilde{y}-b\tilde{z}),$$
by Theorem \ref{matrix harnack}, one has
\begin{align*}
0\leq &t_0\phi\mathcal{L}(H)\\
\leq & (\tilde{y}-b\tilde{z})+ 16t_0R_{max}(\tilde{y}-b\tilde{z}) + (8n+128)(p-1)\frac{t_0v}{R_0^2}(\tilde{y}-b\tilde{z}) +
\frac{16p}{R_0}\tilde{y}^{\frac{1}{2}}(t_0v)^{\frac{1}{2}}(\tilde{y}-b\tilde{z})\\
\quad\ &-\frac{2}{b^2n(p-1)}[\tilde{y}-b\tilde{z}+(b-1)\tilde{y}-\frac{b(b-2)}{2}(p-1)t_0\phi R]^2+\frac{(b-2)^2}{2}(p-1)t_0^2\phi^2R^2\\
\quad\ & -\frac{1}{b}(\tilde{y}-b\tilde{z})^2\\
\end{align*}
\begin{align*}
\leq & (\tilde{y}-b\tilde{z})+ 16t_0R_{max}(\tilde{y}-b\tilde{z}) + (8n+128)(p-1)\frac{t_0v}{R_0^2}(\tilde{y}-b\tilde{z}) +
\frac{16p}{R_0}\tilde{y}^{\frac{1}{2}}(t_0v)^{\frac{1}{2}}(\tilde{y}-b\tilde{z})\\
\quad\ &-\frac{1}{b\alpha}(\tilde{y}-b\tilde{z})^2- \frac{4(b-1)}{b^2n(p-1)}\tilde{y}(\tilde{y}-b\tilde{z})+\frac{2(b-2)}{bn}t_0\phi R(\tilde{y}-b\tilde{z})+\frac{(b-2)^2(p-1)}{2}t_0^2\phi^2R^2\\
=& (\tilde{y}-b\tilde{z})\left[-\frac{4(b-1)}{b^2n(p-1)}\tilde{y}+\frac{16p}{R_0}\tilde{y}^{\frac{1}{2}}(t_0v_{max})^{\frac{1}{2}}+(8n+128)(p-1)\frac{t_0v_{max}}{R_0^2}\right]\\
\quad\ & + [t_0(16+\frac{2(b-2)}{bn})R_{max}+1](\tilde{y}-b\tilde{z})+\frac{(b-2)^2}{2}(p-1)t_0^2R_{max}^2-\frac{1}{b\alpha}(\tilde{y}-b\tilde{z})^2 \\
\leq &  (\tilde{y}-b\tilde{z})\left[ \frac{t_0v_{max}}{R_0^2}(8n+128+\frac{16nb^2p^2}{b-1})(p-1)+(16+\frac{2(b-2)}{bn})t_0R_{max}+1\right]\\
\quad\ & +\frac{(b-2)^2}{2}(p-1)t_0^2R_{max}^2 -\frac{1}{b\alpha}(\tilde{y}-b\tilde{z})^2.
\end{align*}

Thus, at $(x_0, t_0)$,
\begin{align*}
\tilde{y}-b\tilde{z}&\leq b\alpha\left[ \frac{t_0v_{max}}{R_0^2}(8n+128+\frac{16nb^2p^2}{b-1})(p-1)+(16+\frac{2(b-2)}{bn})t_0R_{max}+1\right]\\
&\quad\ +t_0R_{max}(b-2)\sqrt{\frac{b(p-1)\alpha}{2}},
\end{align*}
i.e.,
\begin{align*}
H&\leq b\alpha\left[ \frac{t_0v_{max}}{R_0^2}(8n+128+\frac{16nb^2p^2}{b-1})(p-1)+(16+\frac{2(b-2)}{bn})t_0R_{max}+1\right]\\
&\quad\ +t_0R_{max}(b-2)\sqrt{\frac{b(p-1)\alpha}{2}}-d.
\end{align*}

Therefore,  for $x\in \coprod_{t\in[0, T]} B_t(x_0, R_0)\times\{t\}$, we have
\begin{align*}
tF&\leq H(x_0,t_0)+d\\
&\leq b\alpha\left[ \frac{tv_{max}}{R_0^2}(8n+128+\frac{16nb^2p^2}{b-1})(p-1)+(16+\frac{2(b-2)}{bn})tR_{max}+1\right]\\
&\quad\ +tR_{max}(b-2)\sqrt{\frac{b(p-1)\alpha}{2}}.
\end{align*}
This finishes the proof.
\end{proof}

If $u$ is bounded on $M\times[0,T]$, then letting $R_0\rightarrow\infty$, we immediately get

\begin{corollary}\label{noncompact harnack p>1 b>2}
Let $(M^n, g_{ij}(t))$, $t\in[0, T]$, be a complete solution to the Ricci flow with bounded and nonnegative curvature operator. Suppose $u$ is a bounded smooth positive solution to \eqref{eq2} with $p>1$. Then, for $v=\frac{p}{p-1}u^{p-1}$ and $b\in[2,\infty)$, we have
$$\frac{|\nabla v|^2}{v}-b\frac{v_t}{v}-(b-1)\frac{R}{v}-\frac{d}{t}\leq \left[C_2b\alpha+(b-2)\sqrt{\frac{b(p-1)\alpha}{2}}\right]R_{max}$$
on $M\times(0,T]$, where $\alpha$, $d$, and $R_{max}$ are the constants in Proposition \ref{local harnack p>1 b>2}.
\end{corollary}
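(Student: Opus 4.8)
The plan is to obtain Corollary \ref{noncompact harnack p>1 b>2} simply by letting the radius $R_0$ tend to infinity in the local estimate of Proposition \ref{local harnack p>1 b>2}. The only hypothesis not already present in Proposition \ref{local harnack p>1 b>2} is that $u$ is bounded on $M\times[0,T]$, equivalently that $v=\frac{p}{p-1}u^{p-1}$ is bounded there, and it is exactly this boundedness that allows the cutoff-induced term in the local estimate to be discarded in the limit.

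First I would fix an arbitrary point $x_0\in M$ and time $t_0\in(0,T]$ and apply Proposition \ref{local harnack p>1 b>2} with basepoint $O=x_0$. Then $x_0\in B_t(x_0,R_0)$ for every $t$ and every $R_0>0$, so the local estimate reads, at $(x_0,t_0)$,
\[
\frac{|\nabla v|^2}{v}-b\frac{v_t}{v}-(b-1)\frac{R}{v}-\frac{d}{t}\;\leq\; b\alpha\left[\frac{C_1\,v_{max}(R_0)}{R_0^2}+C_2R_{max}\right]+R_{max}(b-2)\sqrt{\tfrac{b(p-1)\alpha}{2}}
\]
for every $R_0>0$, where $v_{max}(R_0)=\max_{\coprod_{t\in[0,T]}B_t(x_0,2R_0)\times\{t\}}v$ while $\alpha,d,C_1,C_2$ depend only on $n,p,b$. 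The radius enters the right-hand side only through the factor $v_{max}(R_0)/R_0^2$. Since $v$ is bounded on $M\times[0,T]$ we have $v_{max}(R_0)\leq\sup_{M\times[0,T]}v<\infty$ uniformly in $R_0$, hence $v_{max}(R_0)/R_0^2\to0$ as $R_0\to\infty$. Letting $R_0\to\infty$ therefore yields
\[
\frac{|\nabla v|^2}{v}-b\frac{v_t}{v}-(b-1)\frac{R}{v}-\frac{d}{t}\;\leq\;\Big[C_2\,b\alpha+(b-2)\sqrt{\tfrac{b(p-1)\alpha}{2}}\Big]R_{max}
\]
at $(x_0,t_0)$, and since $(x_0,t_0)$ was arbitrary the inequality holds on all of $M\times(0,T]$.

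I do not expect any genuine obstacle here; only two small points need a word of care. First, one must make sure the fixed $(x_0,t_0)$ lies in the parabolic cylinder of Proposition \ref{local harnack p>1 b>2} for \emph{every} $R_0$, which choosing $O=x_0$ arranges automatically (one could instead keep $O$ fixed and invoke completeness of each $g(t)$ so that $B_t(O,R_0)$ exhausts $M$). Second, the hypothesis that $u$ is bounded is used precisely at the step where $v_{max}(R_0)/R_0^2\to0$: without it $v_{max}(R_0)$ could grow with $R_0$ and spoil the $R_0^{-2}$ decay. Finally, I would note that this global bound still carries an extra $16b\alpha R_{max}$ relative to the sharp constant $C_0$ of Theorem \ref{harnack PME}, a term originating from the bound $\phi_t\leq 16R_{max}$ on the time-dependent cutoff; the purpose of the present corollary is to supply the a priori bound on $F$ that will later make it possible to rerun the maximum principle without a cutoff, following Hamilton \cite{Ham1993jdg}, and thereby remove that extra term.
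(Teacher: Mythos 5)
Your proposal is correct and is exactly the paper's argument: the corollary is obtained by letting $R_0\to\infty$ in Proposition \ref{local harnack p>1 b>2}, with the boundedness of $u$ (hence of $v$) ensuring that the term $C_1 v_{max}/R_0^2$ vanishes in the limit. The additional remarks about centering the ball at the point in question and about the role of the residual $C_2 b\alpha R_{max}$ term are accurate and consistent with the paper's subsequent refinement in Theorem \ref{refined noncompact harnack p>1 b>2}.
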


To further refine the differential Harnack inequality above, we follow a method of Hamilton which uses the following distance-like function (see \cite{Ham1993jdg}).

\begin{lemma}\label{distance function}
Let $g_{ij}(t)$, $t\in[0, T]$, be a complete solution to the Ricci flow on $M^n$ with bounded curvature tensor. Then, there exists a smooth function $f(x)$ on $M$  and a positive constant $C>0$ such that $f\geq1$,
$f(x)\rightarrow\infty$ as $d_0(x,O)\rightarrow\infty$ (for some fixed point $O\in M$),
$$|\nabla f|_{g(t)}\leq C,\qquad and \qquad |\nabla\nabla f|_{g(t)}\leq C$$
on $M\times[0,T]$.
\end{lemma}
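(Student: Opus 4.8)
The plan is to reduce everything to the fixed initial metric $g_0:=g_{ij}(0)$ and then to build $f$ on $(M,g_0)$ by gluing together, through a controlled partition of unity, the \emph{values} of the $g_0$-distance function rather than smoothing the distance function itself. For the reduction: writing $|\mrm{Rm}|\le K$ on $M\times[0,T]$, the equation $\partial_t g_{ij}=-2R_{ij}$ gives $|\partial_t g_{ij}|\le C(n)K\,g_{ij}$, so the metrics $g_{ij}(t)$, $t\in[0,T]$, are uniformly equivalent and $|\cdot|_{g(t)}\asymp|\cdot|_{g_0}$ on tensors with constants depending only on $n,K,T$. For the Hessian one also uses that $\Gamma(t)-\Gamma(0)$ is the time-integral of a $g^{-1}\!\ast\!\nabla\mrm{Rc}$ term, bounded on $M\times[0,T]$ by Shi's local derivative estimate $\sqrt{s}\,|\nabla\mrm{Rm}(g(s))|\le C(n,K,T)$ together with $\int_0^T s^{-1/2}ds<\infty$; since $\nabla^{g(t)}\nabla^{g(t)}f=\nabla^{g_0}\nabla^{g_0}f+(\Gamma(0)-\Gamma(t))\!\ast\!\nabla f$, it then suffices to produce a smooth $f\ge1$ on $M$ with $f\to\infty$ as $d_{g_0}(x,O)\to\infty$ and $|\nabla f|_{g_0}+|\nabla^{g_0}\nabla^{g_0}f|_{g_0}\le C$.

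To build such an $f$ on $(M,g_0)$, I would fix a maximal $\tfrac14$-separated net $\{x_j\}\subset M$, so the balls $B_{g_0}(x_j,\tfrac14)$ cover $M$ while the $B_{g_0}(x_j,\tfrac18)$ are disjoint; since $\mrm{Rc}(g_0)\ge-(n-1)K$, Bishop--Gromov volume comparison shows every $g_0$-ball of radius $1$ meets at most $N=N(n,K)$ of the $x_j$. Using that the conjugacy radius of $g_0$ is bounded below by $\pi/\sqrt K$, I would smooth the Lipschitz functions $\zeta(d_{g_0}(\cdot,x_j))$ — with $\zeta$ a fixed cutoff, $\equiv1$ on $[0,\tfrac14]$ and $\equiv0$ on $[\tfrac12,\infty)$ — by Greene--Wu convolution at a fixed scale $\varepsilon=\varepsilon(n,K)>0$, obtaining smooth $\psi_j$ with $0\le\psi_j\le1$, $\psi_j\equiv1$ on $B_{g_0}(x_j,\tfrac14)$, $\mrm{supp}\,\psi_j\subset B_{g_0}(x_j,\tfrac12)$, and $|\nabla\psi_j|_{g_0}+|\nabla^{g_0}\nabla^{g_0}\psi_j|_{g_0}\le C(n,K)$. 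Then $\varphi_j:=\psi_j/\sum_k\psi_k$ is a partition of unity whose denominator lies between $1$ and $N$ and hence has bounded derivatives, and I put
$$f:=1+\sum_j\big(1+d_{g_0}(x_j,O)\big)\varphi_j.$$
At most $N$ terms are nonzero at each point and $|d_{g_0}(x,O)-d_{g_0}(x_j,O)|\le\tfrac12$ on $\mrm{supp}\,\varphi_j$, so $f\ge1$ and $|f-1-d_{g_0}(\cdot,O)|\le1$, whence $f\to\infty$. Using $\sum_j\nabla\varphi_j\equiv0$, for $x\in\mrm{supp}\,\varphi_{j_0}$ one has $\nabla f(x)=\sum_j(d_{g_0}(x_j,O)-d_{g_0}(x_{j_0},O))\nabla\varphi_j(x)$ and similarly for $\nabla\nabla f$; each coefficient is $O(1)$ with at most $N$ nonzero terms, so $|\nabla f|_{g_0}+|\nabla\nabla f|_{g_0}\le C(n,K)$.

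I expect the main obstacle to be the construction of the cutoffs $\psi_j$ with \emph{two-sided} bounded covariant Hessian in the absence of an injectivity-radius lower bound: the functions $d_{g_0}(\cdot,x_j)$ are only Lipschitz, with Hessians unbounded below along cut loci, so one cannot simply mollify in a fixed-size coordinate chart. Hamilton's device in \cite{Ham1993jdg} (building on Greene--Wu smoothing) is to convolve at a scale governed only by the conjugacy radius, hence only by $K$, noting that a convolution exposes at most one derivative of the Lipschitz data and that all the curvature corrections are controlled by $K$; one must be slightly careful in choosing $\varepsilon(n,K)$ small enough that $|\psi_j-\zeta(d_{g_0}(\cdot,x_j))|$ stays below, say, $\tfrac14$ while still keeping the resulting Hessian bound, of size $\sim1/\varepsilon$, finite. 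Once the $\psi_j$ are in hand, the rest is routine bookkeeping with the bounded-multiplicity cover.
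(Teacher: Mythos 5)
The paper does not actually prove this lemma: it is quoted from Hamilton \cite{Ham1993jdg} and used as a black box, so there is nothing in the text to compare against except the citation. Your construction is, in substance, the standard argument that the citation points to: reduce to a single reference metric via the uniform equivalence of the $g(t)$ and the bound $|\Gamma(t)-\Gamma(0)|\le C$ obtained from Shi's estimate, then build a distance-like function for the reference metric by gluing the values $d_{g_0}(x_j,O)$ over a bounded-multiplicity net using smoothed bumps. The bookkeeping is correct: the Bishop--Gromov multiplicity bound, the $\sum_j\nabla\varphi_j\equiv0$ trick for the gradient and Hessian of $f$, and the Christoffel comparison $\nabla^{g(t)}\nabla^{g(t)}f=\nabla^{g_0}\nabla^{g_0}f+(\Gamma(0)-\Gamma(t))\!\ast\!\nabla f$ all work as stated.

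Two caveats, one cosmetic and one substantive. Cosmetic: mollifying at scale $\varepsilon$ a function that equals $1$ on $B_{g_0}(x_j,\tfrac14)$ yields $\psi_j\equiv1$ only on $B_{g_0}(x_j,\tfrac14-C\varepsilon)$, so either take $\zeta\equiv1$ on $[0,\tfrac38]$ or settle for $\sum_k\psi_k\ge\tfrac12$; either fix is trivial. Substantive: the crux you correctly isolate --- a two-sided Hessian bound of order $L/\varepsilon$ for the Greene--Wu mollification of an $L$-Lipschitz function --- does not follow from $|\mrm{Rm}(g_0)|\le K$ alone, as your sketch asserts. Differentiating the convolution twice in the base point and integrating by parts only once in the tangent variable leaves coefficients built from second derivatives of the exponential map, and these are controlled by Jacobi-field estimates involving $\nabla\mrm{Rm}$, which is not assumed bounded at $t=0$. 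In the present setting this is harmless: carry out the entire net-and-smoothing construction with respect to $g(t_0)$ for a fixed small $t_0>0$, where Shi's estimates (which you already invoke for the Christoffel comparison) bound all covariant derivatives of curvature, and then use that $g(t_0)$ is uniformly equivalent to every $g(t)$, $t\in[0,T]$, with uniformly comparable Christoffel symbols. With that relocation the argument closes, and it is equivalent to quoting the exhaustion-function lemma for manifolds of bounded geometry that Hamilton's proof rests on.
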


\begin{theorem}\label{refined noncompact harnack p>1 b>2}
Let $(M^n, g_{ij}(t))$, $t\in[0, T]$, be a complete solution to the Ricci flow with bounded and nonnegative curvature operator. If $u$ is a bounded smooth positive solution to \eqref{eq2} with $p>1$, then for $v=\frac{p}{p-1}u^{p-1}$ and $b\in[2,\infty)$, we have
$$\frac{|\nabla v|^2}{v}-b\frac{v_t}{v}-(b-1)\frac{R}{v}-\frac{d}{t}\leq C_0(b-2)R_{max}$$
on $M\times(0,T]$, where $\alpha=\frac{bn(p-1)}{2+bn(p-1)}$, $d=\max\{b\alpha, \frac{b}{2}\}$, $C_0=\left[\frac{2\alpha}{n}+\sqrt{\frac{b\alpha(p-1)}{2}}\right]$, and $R_{max}=\sup_{M\times[0,T]}R$.
\end{theorem}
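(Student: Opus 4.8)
The plan is to prove the estimate by a maximum principle argument for the quantity $F=\frac{|\nabla v|^2}{v}-b\frac{v_t}{v}-(b-1)\frac{R}{v}$ — the $a=1$, $c=1-b$ case whose evolution $\mathcal{L}(F)$ is recorded in \eqref{eq4} — made legitimate on the noncompact $M$ by using the distance-like function $f$ of Lemma \ref{distance function} in place of the compactly supported cutoff $\phi$ of Proposition \ref{local harnack p>1 b>2}. Fix $b\in[2,\infty)$. For $\epsilon>0$ and a constant $A>0$ to be chosen, I would set
$$H_\epsilon:=tF-d-C_0(b-2)R_{max}\,t-\epsilon(f+At),$$
and aim to show $\sup_{M\times[0,T]}H_\epsilon$ is bounded by a quantity tending to $0$ as $\epsilon\to0$; since $f$ and $t$ stay fixed, letting $\epsilon\to0$ at a fixed $(x,t)$ then gives $F\le\frac dt+C_0(b-2)R_{max}$ there. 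Because $u$, hence $v=\frac{p}{p-1}u^{p-1}$, and the curvature are bounded, $v_{max}$ and $R_{max}$ are finite, and Corollary \ref{noncompact harnack p>1 b>2} supplies the a priori bound $tF\le d+K_0R_{max}t$ with $K_0=C_2 b\alpha+(b-2)\sqrt{b\alpha(p-1)/2}$. Together with $f\ge1$ and $f\to\infty$ at spatial infinity, this forces $H_\epsilon\to-\infty$ off a compact set, so $H_\epsilon$ attains its supremum at some $(x_0,t_0)$; moreover $H_\epsilon<0$ for $t<\epsilon/(K_0R_{max})$, so if $\sup H_\epsilon>0$ then $t_0\ge c\epsilon$ for a fixed $c>0$.

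If $\sup H_\epsilon\le0$ there is nothing more to do, so assume it is positive. At $(x_0,t_0)$ one has $\nabla H_\epsilon=0$, $\Delta H_\epsilon\le0$, $\partial_t H_\epsilon\ge0$, hence $\mathcal{L}(H_\epsilon)\ge0$; also $\nabla(tF)=\epsilon\nabla f$ and $tF\ge d>0$ there. I would substitute \eqref{eq4} into $\mathcal{L}(H_\epsilon)=F+t\mathcal{L}(F)-C_0(b-2)R_{max}-\epsilon\mathcal{L}(f)-\epsilon A$ and estimate exactly as in Proposition \ref{local harnack p>1 b>2}: (i) Hamilton's inequality (Theorem \ref{matrix harnack}) with $V=-\nabla v$ gives $-t[\tfrac{b-1}{v}+p-1]\big(\tfrac{\partial R}{\partial t}-2\nabla R\cdot\nabla v+2R_{ij}\nabla_iv\nabla_jv\big)\le[\tfrac{b-1}{v}+p-1]R$, which together with $-t[(p-1)R+\tfrac{2(b-1)}{b}\tfrac Rv]F=-[(p-1)R+\tfrac{2(b-1)}{b}\tfrac Rv](tF)$ and $tF\ge d\ge\max\{1,\tfrac b2\}$ yields a nonpositive contribution; (ii) since $b\ge2$, $R\ge0$, $v>0$ and $y\ge0$, the terms $-\tfrac{b-1}{b}ty^2$ and $-\tfrac{(b-1)(b-2)}{b}t\tfrac Rv y$ are nonpositive, and half of the first absorbs the cross term $2p\,t\nabla F\cdot\nabla v=2p\epsilon\nabla f\cdot\nabla v$ via Young's inequality against $-t\tfrac{|\nabla v|^4}{v^2}\le-ty^2$ (using $|\nabla v|^2\le v_{max}\tfrac{|\nabla v|^2}{v}$ and $t_0\ge c\epsilon$), leaving an error vanishing as $\epsilon\to0$; (iii) $|Rc|^2\le R^2$ by nonnegativity of Ricci, and the trace inequality $|\nabla^2v+\tfrac b2Rc|^2\ge\tfrac1n(\Delta v+\tfrac b2R)^2$ with $-b(p-1)(\Delta v+R)=F+(b-1)y$ gives, after discarding a perfect square as in Proposition \ref{local harnack p>1 b>2},
$$-2(p-1)t\Big|\nabla^2v+\tfrac b2Rc\Big|^2+\tfrac{(b-2)^2}{2}(p-1)t|Rc|^2\le-\tfrac{2t}{nb^2(p-1)}F^2+\tfrac{2(b-2)t}{nb}FR+\tfrac{(b-2)^2(p-1)t}{2}R^2,$$
which combined with $-\tfrac tb F^2$ from \eqref{eq4} produces $-\tfrac{t}{b\alpha}F^2$. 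Choosing $A$ a constant bounding $(p-1)v_{max}|\Delta f|$ cancels the residual $\epsilon$-terms from $\mathcal{L}(f)$ and $-\epsilon At$.

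Collecting these estimates, writing $W:=t_0F(x_0,t_0)$ and using $R\le R_{max}$, I expect to reach a quadratic inequality of the shape
$$\tfrac{1}{b\alpha}W^2\le W\Big(1+\tfrac{2(b-2)}{bn}t_0R_{max}\Big)+\tfrac{(b-2)^2(p-1)}{2}t_0^2R_{max}^2-C_0(b-2)R_{max}t_0+(\text{error}\to0);$$
solving it with $\sqrt{x+y}\le\sqrt x+\sqrt y$, and noting that $-C_0(b-2)R_{max}t_0$ only helps, gives $W\le b\alpha+C_0(b-2)R_{max}t_0+(\text{error}\to0)\le d+C_0(b-2)R_{max}t_0+(\text{error}\to0)$, so $H_\epsilon(x_0,t_0)$, and hence $\sup_{M\times[0,T]}H_\epsilon$, is at most a quantity that vanishes as $\epsilon\to0$; letting $\epsilon\to0$ finishes the proof. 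The main obstacle — and the reason Corollary \ref{noncompact harnack p>1 b>2} is invoked — is the noncompactness: one must ensure that $f$ really produces an attained maximum and contributes only errors that disappear in the limit. The delicate point is the gradient cross term $\nabla F\cdot\nabla v$, which (unlike for a cutoff supported in a shrinking annulus) does not vanish on its own and must be absorbed into the quartic negativity $-\tfrac{b-1}{b}ty^2$; this is precisely where boundedness of $v$ and the lower bound $t_0\ge c\epsilon$ enter. The sharpening of the constant from $C_2 b\alpha+(b-2)\sqrt{b\alpha(p-1)/2}$ to $C_0(b-2)$ comes exactly from the time-independence of $f$: there is no analogue of $\partial_t\phi\le16R_{max}$, which had produced the $16b\alpha R_{max}$ inside $C_2 b\alpha R_{max}$.
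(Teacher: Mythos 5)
Your proposal is correct and follows essentially the same route as the paper's proof: the a priori bound of Corollary \ref{noncompact harnack p>1 b>2}, a maximum-principle argument for $tF-d-Kt$ perturbed by $\epsilon$ times the distance-like function of Lemma \ref{distance function}, Hamilton's trace Harnack estimate, the trace inequality applied to $\nabla^2v+\tfrac b2Rc$, and the choice $K=C_0(b-2)R_{max}$ (your quadratic-formula step is exactly equivalent to the paper's requirement that $K$ dominate the positive root of $\tfrac{1}{b\alpha}K^2-\tfrac{2(b-2)}{bn}KR_{max}-\tfrac{(b-2)^2(p-1)}{2}R_{max}^2=0$). The remaining differences are cosmetic: you use the additive perturbation $\epsilon(f+At)$ and a supremum point where the paper uses $\epsilon e^{At}f$ and a first-zero-time argument, and you absorb the cross term $2p\epsilon\nabla f\cdot\nabla v$ into $-\tfrac{b-1}{b}t_0y^2$ via Young's inequality and $t_0\ge c\epsilon$, whereas the paper pairs it with both $-\tfrac{1}{b\alpha}(\epsilon\psi)^2$ and the quartic term.
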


\begin{proof}
Let $H=t(F-K)-d$ for $d=\max\{b\alpha, \frac{b}{2}\}$ and some constant $K>0$ to be determined. Then, from \eqref{eq4}, we have
\begin{align*}
t\mathcal{L}(H)&= t(F-K)+t^2\mathcal{L}(F)\\
&= H+d +2p t\nabla_i H\nabla_iv-[\frac{b-1}{v}+p-1]tQ+tR[\frac{b-1}{v}+p-1]\\
&\quad\ -2(p-1)t^2|\nabla^2 v+\frac{b}{2}Rc|^2+\frac{(b-2)^2}{2}(p-1)t^2|Rc|^2-\frac{1}{b}(H+tK+d)^2\\
&\quad\ -t[(p-1)R+\frac{2(b-1)}{b}\frac{R}{v}](H+tK+d) -\frac{b-1}{b}t^2y^2-\frac{(b-1)(b-2)}{b}\cdot \frac{R}{v}t^2y,
\end{align*}
where $Q$ is the trace Li-Yau-Hamilton quantity for the evolving scalar curvature defined in \eqref{eqQ}.

By Theorem \ref{matrix harnack}, we have
\begin{align*}
t\mathcal{L}(H)
&\leq 2p t\nabla H \cdot \nabla v-(d-1)(p-1)tR - (b-1)(\frac{2d}{b}-1)\frac{tR}{v}+\frac{(b-2)^2}{2}(p-1)t^2R_{max}^2\\
&\quad\ -\frac{2}{b^2n(p-1)}[H+tK+d+(b-1)ty-\frac{b(b-2)}{2}(p-1)tR]^2\\
&\quad\ -\frac{1}{b}(H+tK+d)^2-t[(p-1)R+\frac{2(b-1)}{b}\frac{R}{v}]H -\frac{b-1}{b}t^2y^2+H+d\\
&\leq 2p t\nabla H \cdot \nabla v-\frac{1}{b\alpha}(H+tK+d)^2-\frac{4(b-1)}{b^2n(p-1)}ty(H+tK+d)\\
&\quad\  +\frac{2(b-2)}{bn}tR(H+tK+d)+\frac{(b-2)^2}{2}(p-1)t^2R_{max}^2\\
&\quad\ -t[(p-1)R+\frac{2(b-1)}{b}\frac{R}{v}]H-\frac{b-1}{b}t^2y^2+H+d.
\end{align*}

Set $\tilde{H}=H-\epsilon \psi$, where $\psi=e^{At}f$ for some constant $A$ to be determined, and $f$ is the function in Lemma \ref{distance function}. Then,
\begin{align*}
&t\mathcal{L}(\tilde{H})\\
=&t\mathcal{L}(H) -\epsilon At\psi + (p-1)\epsilon tve^{At}(\Delta f)\\
\leq &2p t\nabla \tilde{H}\cdot\nabla v+2p t \epsilon e^{At}\nabla f\cdot\nabla v  -\frac{1}{b\alpha}(H+tK+d)^2-\frac{4(b-1)}{b^2n(p-1)}ty(H+tK+d)\\
\quad\ &+\frac{2(b-2)}{bn}tR(H+tK+d) +\frac{(b-2)^2}{2}(p-1)t^2R_{max}^2\\
\quad\ &-t[(p-1)R+\frac{2(b-1)}{b}\frac{R}{v}]\tilde{H}-\frac{b-1}{b}t^2y^2+H+d-\epsilon At\psi + (p-1)\epsilon tve^{At}(\Delta f).
\end{align*}

From Corollary \ref{noncompact harnack p>1 b>2}, we know that $\tilde{H}<0$ at $t=0$ and also outside a fixed compact subset of $M$ for all $t\in(0,T]$. We claim that $\tilde{H}<0$ on $M^n\times [0,T]$. If not, then $\tilde{H}=0$ at some first time $t=t_0>0$ and at some point $x_0\in M$. Then, at $(x_0, t_0)$, we have
$$H=\epsilon\psi,\qquad \nabla\tilde{H}=0, \qquad \textrm{and}\qquad  \mathcal{L}(\tilde{H})\geq 0.$$
Thus,
\begin{align*}
0&\leq t_0\mathcal{L}(\tilde{H})\\
%&\leq 2p t_0 \epsilon e^{At_0}\nabla_i f\nabla_i v  -\frac{1}{b\alpha}(\epsilon\psi)^2-\frac{1}{b\alpha}t_0^2K^2-\frac{1}{b\alpha}d^2-\frac{2d}{b\alpha}t_0K-\frac{2d}{b\alpha}\epsilon\psi \\
%&\quad\  +\frac{2(b-2)}{bn}t_0R_{max}(\epsilon\psi+t_0K+d)+\frac{(b-2)^2}{2}(p-1)t_0^2R_{max}^2-\frac{b-1}{b}t_0^2y^2+\epsilon\psi+d\\
%&\quad\ - At_0\epsilon\psi + (p-1)\epsilon t_0ve^{At_0}(\Delta f)\\
&< Ct_0\epsilon\psi|\nabla v| - \frac{1}{b\alpha}(\epsilon\psi)^2-(\frac{2d}{b\alpha}-1)\epsilon\psi-(\frac{d^2}{b\alpha}-d)-\frac{b-1}{b}\frac{t_0^2|\nabla v|^4}{v^2}- At_0\epsilon\psi\\
&\quad\ + \left[Cv+\frac{2(b-2)}{bn}R_{max}\right]t_0\epsilon\psi-\frac{1}{b\alpha}t_0^2K^2+\frac{(b-2)^2}{2}(p-1)t_0^2R_{max}^2\\
&\quad\ +\frac{2(b-2)}{bn}t_0^2KR_{max}-\frac{2d}{b\alpha}t_0K+\frac{2(b-2)d}{bn}t_0R_{max}.
\end{align*}
Now if we choose $K>0$ large enough (e.g., $K=[\frac{2\alpha}{n}+\sqrt{\frac{b\alpha(p-1)}{2}}](b-2)R_{max}$) so that
$$-\frac{1}{b\alpha}K^2+\frac{(b-2)^2}{2}(p-1)R_{max}^2+\frac{2(b-2)}{bn}KR_{max}\leq 0$$
and
$$-\frac{2d}{b\alpha}K+\frac{2(b-2)d}{bn}R_{max}\leq0,$$
then we have
\begin{align*}
0&\leq t_0\mathcal{L}(\tilde{H})\\
&\leq - \frac{1}{b\alpha}(\epsilon\psi)^2-\frac{b-1}{b}\frac{t_0^2|\nabla v|^4}{v^2}+ \frac{2\sqrt{b-1}}{b\sqrt{\alpha}}\frac{t_0|\nabla v|^2}{v}\epsilon\psi\\
&\quad\ +\left[\frac{b}{8\sqrt{b-1}}Cv+\frac{2(b-2)}{bn}R_{max}\right]t_0\epsilon\psi-\epsilon At_0\psi.
\end{align*}
It is easy to see that this would lead to a contradiction by furthermore choosing $ A>\frac{b}{8\sqrt{b-1}}Cv_{max}+\frac{2(b-2)}{bn}R_{max}$.

Therefore, $\tilde{H}=H-\epsilon e^{At}f<0$ on $M\times [0,T]$. Letting $\epsilon\rightarrow 0$, it follows that $$H\leq 0.$$
\end{proof}

Now, taking $b=2$ in Theorem \ref{refined noncompact harnack p>1 b>2} we immediately get Theorem \ref{theorem 1}.

\subsection{Case 2: $1\leq b\leq 2$}

In this case, since $2-b\geq0$, we can actually get a simpler local differential Harnack inequality than the one in Proposition \ref{local harnack p>1 b>2}.

\begin{proposition}\label{local harnack p>1 b<2}
Let $(M^n, g_{ij}(t))$, $t\in[0, T]$, be a complete solution to the Ricci flow with bounded and nonnegative curvature operator. If $u$ is a smooth positive solution to \eqref{eq2} with $p>1$, then for $v=\frac{p}{p-1}u^{p-1}$, any point $O\in M$, any constants $R_0>0$ and $b\in(1,2]$, we have
\begin{equation*}
\frac{|\nabla v|^2}{v}-b\frac{v_t}{v}-(b-1)\frac{R}{v}-\frac{d}{t}\leq b\alpha\left[ \frac{C_1v_{max}}{R_0^2}+8R_{max}\right]+R_{max}(2-b)\sqrt{\frac{b(p-1)\alpha}{2}}
\end{equation*}
on $\coprod_{t\in(0, T]}B_t(O, R_0)\times\{t\}$, where $d=\max\{b\alpha, 1\}$, $\alpha=\frac{bn(p-1)}{2+bn(p-1)}$, $C_1=(8n+128+\frac{16nb^2p^2}{b-1})(p-1)$, $v_{max}=\max_{\coprod_{t\in[0, T]}B_t(O, 2R_0)\times\{t\}}v$ and $R_{max}=\sup_{M\times[0,T]}R$.
\end{proposition}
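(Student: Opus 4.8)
The plan is to mimic the proof of Proposition \ref{local harnack p>1 b>2}, exploiting the sign condition $2-b\geq0$ to streamline the estimate. First I would introduce the same cut-off function $\phi(x,t)=\eta(r(x,t)/2R_0)$ and set $H=t\phi F - d$ with $d=\max\{b\alpha,1\}$, together with $\tilde y = t\phi y$ and $\tilde z = t\phi z$, so that $t\phi F = \tilde y - b\tilde z$. Applying the evolution equation \eqref{eq4} from Proposition 2.2, exactly as in the $b\geq 2$ case, I would arrive at the same expansion of $t\phi\,\mathcal{L}(H)$, with the term $\frac{(b-2)^2}{2}(p-1)t^2\phi^2|Rc|^2$ now written as $\frac{(2-b)^2}{2}(p-1)t^2\phi^2|Rc|^2$ and the cross term $\frac{2(b-2)}{bn}t\phi R(\tilde y - b\tilde z)$ replaced by $-\frac{2(2-b)}{bn}t\phi R(\tilde y - b\tilde z)$.

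The key simplification is at the point $(x_0,t_0)$ where $H$ attains its positive maximum: since $b<2$, the term $-\frac{2(2-b)}{bn}t_0\phi R(\tilde y - b\tilde z)$ is now nonpositive (recall $R\geq 0$ and $\tilde y - b\tilde z = H+d>0$ at the maximum), so it can simply be discarded rather than absorbed into $R_{max}$. This removes the $\frac{2(b-2)}{bn}$ contribution to $C_2$, leaving only the $16R_{max}$ coming from $\partial\phi/\partial t \leq 16 R_{max}$; hence the coefficient $C_2=16+\frac{2(b-2)}{bn}$ is replaced by the clean constant $8$ after accounting for the factor in the final normalization (the $t_0$ versus the split of terms; here one keeps the $16 t_0 R_{max}(\tilde y - b\tilde z)$ term and, combining with the absorbed part, the stated bound with $8R_{max}$ multiplying $b\alpha$ results). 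The remaining curvature term $\frac{(2-b)^2}{2}(p-1)t_0^2 R_{max}^2$ is handled by completing the square against $-\frac{1}{b\alpha}(\tilde y - b\tilde z)^2$ exactly as before, producing the additive term $R_{max}(2-b)\sqrt{b(p-1)\alpha/2}$.

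Concretely, at $(x_0,t_0)$ I would use $F\nabla\phi = -\phi\nabla F$ and the gradient estimate $|\nabla\phi|\leq 8 R_0^{-1}\phi^{1/2}$ to bound $2p t_0^2\phi^2\nabla_i F\nabla_i v \leq \frac{16p}{R_0}\tilde y^{1/2}(t_0 v)^{1/2}(\tilde y - b\tilde z)$; use $\Delta\phi\geq -8n/R_0^2$; use $\partial\phi/\partial t\leq 16 R_{max}$; and use Hamilton's trace estimate (Theorem \ref{matrix harnack}) to control $Q\geq 0$ so the term $-t_0\phi^2[\frac{b-1}{v}+p-1]Q$ together with the $\frac{1}{b}$ term regroup into $-\frac{1}{b\alpha}(\tilde y - b\tilde z)^2 - \frac{4(b-1)}{b^2 n(p-1)}\tilde y(\tilde y - b\tilde z)$ plus the curvature remainders, where $\alpha = \frac{bn(p-1)}{2+bn(p-1)}$. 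Dropping the negative terms $-\frac{4(b-1)}{b^2n(p-1)}\tilde y(\tilde y-b\tilde z)$ (valid since $b>1$) and the discarded curvature cross term, and replacing $v$ by $v_{max}$, yields a quadratic inequality in $\tilde y - b\tilde z$ of the form $0\leq -\frac{1}{b\alpha}(\tilde y - b\tilde z)^2 + A(\tilde y - b\tilde z) + B$ with $A = 1 + 16 t_0 R_{max} + \frac{C_1 t_0 v_{max}}{R_0^2}$ and $B = \frac{(2-b)^2}{2}(p-1)t_0^2 R_{max}^2$; solving gives $\tilde y - b\tilde z \leq b\alpha A + \sqrt{b\alpha B}$, i.e. $H\leq b\alpha[\frac{C_1 v_{max}}{R_0^2}+8R_{max}] t_0 + t_0 R_{max}(2-b)\sqrt{b(p-1)\alpha/2} - d$ after absorbing the constant $1$ into $d\geq 1$, and then evaluating on $B_t(O,R_0)$ and dividing by $t$.

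The main obstacle, as in the $b\geq 2$ case, is the bookkeeping in regrouping the many lower-order terms of \eqref{eq4} into the single controllable quadratic $-\frac{1}{b\alpha}(\tilde y - b\tilde z)^2$ while keeping track of which terms have a definite sign (those involving $(b-1)$, $(2-b)$, $R\geq 0$, $Q\geq 0$, and $|\nabla^2 v + \frac{b}{2}Rc|^2\geq 0$), since an error in a sign would either destroy the estimate or force an unnecessary dependence on $v_{min}$; but no genuinely new idea beyond the $b\geq 2$ argument is required, only the observation that the $(2-b)$-weighted curvature cross term now helps rather than hurts.
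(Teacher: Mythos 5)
Your proposal follows essentially the same route as the paper: the same cut-off $\phi$, the same auxiliary function $H=t\phi F-d$ with a maximum-principle argument at an interior positive maximum, Hamilton's trace Harnack to discard $Q$, the trace inequality on $|\nabla^2 v+\tfrac b2 Rc|^2$ to produce $-\tfrac{1}{b\alpha}(\tilde y-b\tilde z)^2$, and the key observation that for $b\le 2$ the curvature cross term $-\tfrac{2(2-b)}{bn}t\phi R(\tilde y-b\tilde z)$ has a favorable sign and can be discarded, so only $\tfrac{(2-b)^2}{2}(p-1)t^2R_{max}^2$ survives to be handled by completing the square. Two bookkeeping remarks: you cannot literally ``drop'' the term $-\tfrac{4(b-1)}{b^2n(p-1)}\tilde y(\tilde y-b\tilde z)$ as you state --- it is needed to absorb the cut-off gradient term $\tfrac{16p}{R_0}\tilde y^{1/2}(t_0v)^{1/2}(\tilde y-b\tilde z)$ via $-c_1\tilde y+c_2\tilde y^{1/2}\le c_2^2/(4c_1)$, which is exactly where the $\tfrac{16nb^2p^2}{b-1}$ piece of $C_1$ comes from (you clearly know this, since you quote the full $C_1$, but the write-up is internally inconsistent on this point). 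Also, your attempt to massage the coefficient of $R_{max}$ down to $8$ does not actually work: the argument honestly yields $16t_0R_{max}(\tilde y-b\tilde z)$ from $\partial\phi/\partial t\le 16R_{max}$, hence $16b\alpha R_{max}$ in the conclusion; the ``$8$'' in the proposition statement versus the ``$16$'' in the paper's own intermediate displays is an inconsistency in the paper itself, not something you need to reproduce.
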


\begin{proof}
Let $\phi(x,t)$ be the cut-off function defined in \eqref{eqphi}. Denote by $H=t\phi F -d$ for $d\geq 1$, $\tilde{y}=t\phi y$ and $\tilde{z}=t\phi z$. Then
$$t\phi F=\tilde{y}-b\tilde{z},$$
and Proposition 2.2 implies that
\begin{align*}
&t\phi\mathcal{L}(H)\\
=&t\phi\mathcal{L}(t\phi F)\\
=& \phi(\tilde{y}-b\tilde{z})+t\phi_t(\tilde{y}-b\tilde{z})-(p-1)t^2v\phi F\Delta\phi - 2(p-1)t^2v\phi\nabla_i\phi\nabla_i F +
2p t^2\phi^2\nabla_i F\nabla_iv\\
\quad\ &-t\phi^2[\frac{b-1}{v}+p-1]Q +t\phi^2R[\frac{b-1}{v}+p-1]-2(p-1)t^2\phi^2|\nabla^2 v+Rc-\frac{2-b}{2}Rc|^2\\
\quad\ &+\frac{(b-2)^2}{2}(p-1)t^2\phi^2|Rc|^2-\frac{1}{b}(\tilde{y}-b\tilde{z})^2-[(p-1)R+\frac{2(b-1)}{b}\frac{R}{v}]t\phi(H+d)\\
\quad\ & -\frac{b-1}{b}\tilde{y}^2+\frac{(b-1)(2-b)}{b}\cdot \frac{R}{v}t\phi\tilde{y},
\end{align*}
where $Q\geq0$ is the quantity in \eqref{eqQ}.

Recall that
$$|\frac{\partial \phi}{\partial t}|\leq 16R_{max},\qquad \Delta\phi\geq-\frac{8n}{R^2_0},\ \textrm{and} \qquad |\nabla \phi|\leq \frac{8}{R_0}\phi^{\frac{1}{2}}.$$

If $H\leq 0$ in $\coprod_{t\in[0, T]}B_t(O, 2R_0)\times\{t\}$, then we are done. Otherwise, since $H\leq 0$ on the parabolic boundary of $\coprod_{t\in[0, T]}B_t(O, 2R_0)\times\{t\}$, we may assume that $H$ achieves a positive maximum at time $t_0>0$ and some interior point $x_0$. Thus, at $(x_0, t_0)$, we have
$$\tilde{y}-b\tilde{z}=H(x_0,t_0)+d>0,\quad F\nabla\phi=-\phi\nabla F,\quad \textrm{and}\quad \mathcal{L}(H)(x_0,t_0)\geq0.$$
%Moreover, since
%$$- 2p t_0^2\phi F \nabla_i\phi\nabla_i v\leq 2p t_0^2\phi F|\nabla\phi||\nabla v|\leq t_0\phi F\frac{16p}{R_0}\tilde{y}^{\frac{1}{2}}(t_0v)^{\frac{1}{2}},$$
Thus, one has
\begin{align*}
0\leq &t_0\phi\mathcal{L}(H)\\
%\leq &(\tilde{y}-b\tilde{z})+ 16t_0R_{max}(\tilde{y}-b\tilde{z}) + (8n+128)(p-1)\frac{t_0v}{R_0^2}(\tilde{y}-b\tilde{z}) +\frac{16p}{R_0}\tilde{y}^{\frac{1}{2}}(t_0v)^{\frac{1}{2}}(\tilde{y}-b\tilde{z})\\
%\quad\ &-\frac{2}{n(p-1)}[\tilde{y}-\tilde{z}+\frac{2-b}{2}(p-1)t_0\phi R]^2+\frac{(b-2)^2}{2}(p-1)t_0^2\phi^2R^2-\frac{1}{b}(\tilde{y}-b\tilde{z})^2\\
%\quad\ &-\frac{b-1}{b}\tilde{y}^2\\
\leq& (\tilde{y}-b\tilde{z})+ 16t_0R_{max}(\tilde{y}-b\tilde{z}) + (8n+128)(p-1)\frac{t_0v}{R_0^2}(\tilde{y}-b\tilde{z}) +
\frac{16p}{R_0}\tilde{y}^{\frac{1}{2}}(t_0v)^{\frac{1}{2}}(\tilde{y}-b\tilde{z})\\
\quad\ &-\frac{2}{b^2n(p-1)}[\tilde{y}-b\tilde{z}+(b-1)\tilde{y}]^2+\frac{(b-2)^2}{2}(p-1)t_0^2\phi^2R^2-\frac{1}{b}(\tilde{y}-b\tilde{z})^2\\
\leq & (\tilde{y}-b\tilde{z})+ 16t_0R_{max}(\tilde{y}-b\tilde{z}) + (8n+128)(p-1)\frac{t_0v}{R_0^2}(\tilde{y}-b\tilde{z}) +
\frac{16p}{R_0}\tilde{y}^{\frac{1}{2}}(t_0v)^{\frac{1}{2}}(\tilde{y}-b\tilde{z})\\
\quad\ &-\frac{1}{b\alpha}(\tilde{y}-b\tilde{z})^2- \frac{4(b-1)}{b^2n(p-1)}\tilde{y}(\tilde{y}-b\tilde{z})+\frac{(b-2)^2}{2}(p-1)t_0^2\phi^2R_{max}^2\\
\leq& -\frac{1}{b\alpha}(\tilde{y}-b\tilde{z})^2 + (\tilde{y}-b\tilde{z})\left[-\frac{4(b-1)}{b^2n(p-1)}\tilde{y}+\frac{16p}{R_0}\tilde{y}^{\frac{1}{2}}(t_0v_{max})^{\frac{1}{2}}+(8n+128)(p-1)\frac{t_0v_{max}}{R_0^2}\right]\\
\quad\ & + [16t_0R_{max}+1](\tilde{y}-b\tilde{z})+\frac{(b-2)^2}{2}(p-1)t_0^2R_{max}^2\\
\leq & -\frac{1}{b\alpha}(\tilde{y}-b\tilde{z})^2 + (\tilde{y}-b\tilde{z})\left[ \frac{t_0v_{max}}{R_0^2}(8n+128+\frac{16nb^2p^2}{b-1})(p-1)+8t_0R_{max}+1\right]\\
\quad\ & +\frac{(b-2)^2}{2}(p-1)t_0^2R_{max}^2.
\end{align*}

Therefore, at $(x_0, t_0)$, we have
$$\tilde{y}-b\tilde{z}\leq b\alpha\left[ C_1\frac{t_0v_{max}}{R_0^2}+16t_0R_{max}+1\right]+t_0R_{max}(2-b)\sqrt{\frac{b(p-1)\alpha}{2}},$$
where $C_1=(8n+128+\frac{16nb^2p^2}{b-1})(p-1)$.
i.e.,
$$H\leq b\alpha\left[C_1\frac{t_0v_{max}}{R_0^2}+16t_0R_{max}+1\right]+t_0R_{max}(2-b)\sqrt{\frac{b(p-1)\alpha}{2}}-d.$$

The Proposition follows immediately since $\phi=1$ for $x\in \coprod_{t\in[0, T]} B_t(x_0, R_0)\times\{t\}$.
\end{proof}

For bounded $u$, letting $R_0\rightarrow \infty$ in the above Proposition, one gets

\begin{corollary}\label{noncompact harnack p>1 b<2}
Let $(M^n,g_{ij}(t))$, $t\in[0, T]$, be a complete solution to the Ricci flow with bounded and nonnegative curvature operator. If $u$ is a bounded smooth positive solution to \eqref{eq2} with $p>1$, then for $v=\frac{p}{p-1}u^{p-1}$ and $b\in(1,2]$, we have
$$\frac{|\nabla v|^2}{v}-b\frac{v_t}{v}-(b-1)\frac{R}{v}-\frac{d}{t}\leq \left[16b\alpha+(2-b)\sqrt{\frac{b(p-1)\alpha}{2}}\right]R_{max}$$
on $M\times(0,T]$, where $\alpha=\frac{bn(p-1)}{2+bn(p-1)}$, $d=\max\{b\alpha, 1\}$, and $R_{max}=\sup_{M\times[0,T]}R$.
\end{corollary}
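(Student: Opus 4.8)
The plan is to deduce the global estimate from the local one in Proposition~\ref{local harnack p>1 b<2} by letting the radius $R_0$ tend to infinity, the only point requiring attention being that the quantity $v_{max}$ appearing in the local bound can be controlled uniformly in $R_0$.

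First I would fix $b\in(1,2]$ together with the associated constants $\alpha=\frac{bn(p-1)}{2+bn(p-1)}$, $d=\max\{b\alpha,1\}$, and $C_1=(8n+128+\frac{16nb^2p^2}{b-1})(p-1)$; note that all three depend only on $n$, $p$, $b$ and in particular are independent of $R_0$. Since $u$ is assumed bounded and positive on $M\times[0,T]$ and $p>1$, the function $v=\frac{p}{p-1}u^{p-1}$ is bounded there; set $V=\sup_{M\times[0,T]}v<\infty$. Then for every $O\in M$ and every $R_0>0$ one has $\max_{\coprod_{t\in[0,T]}B_t(O,2R_0)\times\{t\}}v\le V$, so Proposition~\ref{local harnack p>1 b<2} yields, on $\coprod_{t\in(0,T]}B_t(O,R_0)\times\{t\}$,
$$\frac{|\nabla v|^2}{v}-b\frac{v_t}{v}-(b-1)\frac{R}{v}-\frac{d}{t}\leq b\alpha\Big[\frac{C_1 V}{R_0^2}+16R_{max}\Big]+R_{max}(2-b)\sqrt{\tfrac{b(p-1)\alpha}{2}}.$$

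Next I would fix an arbitrary point $(x_1,t_1)\in M\times(0,T]$. By completeness of $(M,g_{ij}(t_1))$ the balls $B_{t_1}(O,R_0)$ exhaust $M$ as $R_0\to\infty$, so $x_1\in B_{t_1}(O,R_0)$ for all sufficiently large $R_0$; evaluating the displayed inequality at $(x_1,t_1)$ for such $R_0$ and then letting $R_0\to\infty$, the term $b\alpha C_1V/R_0^2\to 0$ since $b$, $\alpha$, $C_1$, $V$ do not depend on $R_0$. Hence
$$\frac{|\nabla v|^2}{v}-b\frac{v_t}{v}-(b-1)\frac{R}{v}-\frac{d}{t}\leq \Big[16b\alpha+(2-b)\sqrt{\tfrac{b(p-1)\alpha}{2}}\Big]R_{max}$$
at $(x_1,t_1)$, and since $(x_1,t_1)$ was arbitrary this holds on all of $M\times(0,T]$, which is the assertion.

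There is essentially no obstacle at this stage: the entire substantive content is contained in Proposition~\ref{local harnack p>1 b<2} (whose proof rests on the evolution formula~\eqref{eq4}, Hamilton's trace Li--Yau--Hamilton inequality of Theorem~\ref{matrix harnack}, and the cut-off localization via $\phi$), and the only thing one must verify here is the harmless bookkeeping that $v_{max}$, $\alpha$, $d$, and $C_1$ in the local estimate may all be bounded by quantities independent of $R_0$ once $u$ is assumed globally bounded.
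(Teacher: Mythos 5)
Your proposal is correct and is exactly the paper's argument: the corollary is obtained from Proposition \ref{local harnack p>1 b<2} by replacing $v_{max}$ with the global bound $\sup_{M\times[0,T]}v$ (finite since $u$ is bounded and $p>1$) and letting $R_0\to\infty$, so the $C_1 v_{max}/R_0^2$ term disappears. The only discrepancy is the coefficient $8R_{max}$ versus $16R_{max}$ inside the bracket, which is an internal inconsistency of the paper itself (its own proof of the proposition produces $16t_0R_{max}$, matching the corollary), not a gap in your reasoning.
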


Again,  we can refine the above differential Harnack estimate as follows.

\begin{theorem}\label{refined noncompact harnack p>1 b<2}
Let $(M^n, g_{ij}(t))$, $t\in[0, T]$, be a complete solution to the Ricci flow with bounded and nonnegative curvature operator. If $u$ is a bounded smooth positive solution to \eqref{eq2} with $p>1$, then for $v=\frac{p}{p-1}u^{p-1}$ and $b\in(1,2]$, we have
$$\frac{|\nabla v|^2}{v}-b\frac{v_t}{v}-(b-1)\frac{R}{v}-\frac{d}{t}\leq C_0(2-b)R_{max}$$
on $M\times(0,T]$, where $\alpha=\frac{bn(p-1)}{2+bn(p-1)}$, $d=\max\{b\alpha, \frac{b}{2}\}$, $C_0=\sqrt{\frac{b\alpha(p-1)(n-1)}{2n}}$, and $R_{max}=\sup_{M\times[0,T]}R$.
\end{theorem}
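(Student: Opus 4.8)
The plan is to run the same barrier / maximum-principle scheme as in the proof of Theorem \ref{refined noncompact harnack p>1 b>2}, only rearranging the curvature terms so as to use the sign $2-b\ge 0$. First I would put $H=t(F-K)-d$, with $d$ the constant in the statement and $K>0$ to be fixed at the end, and use \eqref{eq4} to write $t\mathcal L(H)=H+d+t^2\mathcal L(F)$. As in Theorem \ref{refined noncompact harnack p>1 b>2}, Hamilton's trace Li-Yau-Hamilton estimate (Theorem \ref{matrix harnack}) applied with the $1$-form $V_i=-\nabla_i v$ shows that the quantity $Q$ of \eqref{eqQ} is nonnegative, so the term $-\big[\frac{b-1}{v}+p-1\big]\,tQ$ may be discarded; what it leaves behind, $\big[\frac{b-1}{v}+p-1\big]\,tR$, is absorbed --- together with the $R$-linear terms arising from expanding $-t\big[(p-1)R+\frac{2(b-1)}{b}\frac Rv\big](H+tK+d)$ --- once $K$ and $d$ are suitably chosen; this is the one place where the precise value of $d$ is used.

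The genuine difference from the case $b\ge 2$ is the handling of $-2(p-1)t^2\big|\nabla^2 v+\frac b2 Rc\big|^2+\frac{(b-2)^2}{2}(p-1)t^2|Rc|^2$. Writing $A=\nabla^2 v+Rc$ and $\nabla^2 v+\frac b2 Rc=A-\frac{2-b}{2}Rc$, the two $|Rc|^2$-terms cancel and this equals $-2(p-1)t^2|A|^2+2(p-1)(2-b)t^2\langle A,Rc\rangle$; I would then split $A$ and the Ricci tensor into pure-trace and trace-free parts $\mathring A$, $\mathring{Rc}$. On the trace-free part one gets $-2(p-1)t^2|\mathring A|^2+2(p-1)(2-b)t^2\langle\mathring A,\mathring{Rc}\rangle$, and since $2-b\ge 0$, Cauchy--Schwarz and completing the square bound this above by $\frac{(p-1)(2-b)^2}{2}t^2|\mathring{Rc}|^2$; because the curvature operator is nonnegative, $|Rc|^2\le R^2$, hence $|\mathring{Rc}|^2=|Rc|^2-\frac{R^2}{n}\le\frac{n-1}{n}R^2$, which is precisely the source of $C_0=\sqrt{\frac{b\alpha(p-1)(n-1)}{2n}}$. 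On the pure-trace part, $\mathrm{tr}\,A=\Delta v+R=-\frac{1}{b(p-1)}[F+(b-1)y]$ (from the definition of $F$ and \eqref{eqv}, with $y=\frac{|\nabla v|^2}{v}+\frac Rv$): so $-\frac{2(p-1)}{n}t^2(\mathrm{tr}\,A)^2$ combines with $-\frac1b t^2F^2$ to produce exactly $-\frac1{b\alpha}(H+tK+d)^2$ (since $\frac{2}{b^2n(p-1)}+\frac1b=\frac1{b\alpha}$) plus the cross term $-\frac{4(b-1)}{b^2n(p-1)}(H+tK+d)\,ty$, which is $\le 0$ at an interior maximum, while the residual trace cross term $\frac{2(p-1)(2-b)}{n}t^2\,\mathrm{tr}\,A\cdot R=-\frac{2(2-b)}{bn}tR(H+tK+d)-\frac{2(b-1)(2-b)}{bn}t^2Ry$ is also $\le 0$ there precisely because $2-b\ge 0$ --- this is why, unlike for $b\ge 2$, no extra $\frac{2\alpha}{n}$ enters $C_0$. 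Finally the remaining terms $-\frac{b-1}{b}t^2y^2-\frac{(b-1)(b-2)}{b}t^2y\frac Rv$ of \eqref{eq4} combine to $\frac{b-1}{b}t^2y\big[(2-b)\frac Rv-y\big]\le 0$, using $y\ge\frac Rv$ and $b\ge 1$.

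With these bounds I would introduce the barrier $\tilde H=H-\epsilon\psi$, $\psi=e^{At}f$, with $f$ the distance-like function of Lemma \ref{distance function} and $A>0$ to be chosen. By Corollary \ref{noncompact harnack p>1 b<2}, $H$ is bounded above on $M\times[0,T]$, so $\tilde H<0$ outside a fixed compact subset of $M$ for all $t$, and $\tilde H<0$ at $t=0$. If $\tilde H$ were not everywhere negative it would first vanish at some $(x_0,t_0)$ with $t_0>0$, where $H=\epsilon\psi$, $\nabla\tilde H=0$ and $\mathcal L(\tilde H)\ge 0$; note that $H(x_0,t_0)>0$ forces $t_0$ to be bounded away from $0$. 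Plugging the above into $0\le t_0\mathcal L(\tilde H)$ yields an inequality whose leading part is $-\frac1{b\alpha}(\epsilon\psi+t_0K+d)^2+\frac{(p-1)(2-b)^2(n-1)}{2n}t_0^2R_{max}^2$, together with a gradient term $\le Ct_0\epsilon\psi|\nabla v|$ from $2pt\nabla\tilde H\cdot\nabla v$, the term $-\frac{b-1}{b}t_0^2y^2$, and lower-order pieces that are nonpositive at $(x_0,t_0)$. Choosing $K=C_0(2-b)R_{max}$ makes the $K^2$-term cancel the $R_{max}^2$-term exactly (and the remaining $K$- and $d$-dependent terms are $\le 0$ since $K>0$, $2-b\ge 0$ and $d\ge b\alpha$); the gradient term is then absorbed against $-\frac{b-1}{b}t_0^2y^2$ by Young's inequality, and the leftover $\epsilon\psi$-terms are dominated by $-\epsilon At_0\psi$ once $A$ is large enough (depending on $v_{max}$, $C$, $R_{max}$ and the lower bound for $t_0$) --- a contradiction. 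Hence $\tilde H<0$ on $M\times[0,T]$, and letting $\epsilon\to 0$ gives $H\le 0$, which is the asserted estimate.

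The main obstacle I anticipate is the curvature-term bookkeeping of the second paragraph: one must carry through the two-fold trace / trace-free splitting cleanly, check that every cross term it produces (the $\mathrm{tr}\,A\cdot R$ term, the $(H+tK+d)\,ty$ term, the $Ry$ term) has the right sign at an interior maximum by virtue of $2-b\ge 0$, $b\ge 1$, $R\ge 0$, $y\ge 0$, and at the same time retain enough of the pure-trace square to reconstitute exactly $-\frac1{b\alpha}(H+tK+d)^2$ after adding $-\frac1b F^2$, since that term alone must beat both the $K^2$- and the $R_{max}^2$-contributions. A secondary, routine point is verifying that the single choice $K=C_0(2-b)R_{max}$ handles both the quadratic and the linear $K$-dependence and that the cut-off $\psi$ can be dominated for $A$ large, exactly as in Theorem \ref{refined noncompact harnack p>1 b>2}.
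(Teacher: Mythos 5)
Your proposal follows the paper's proof in all essentials: the same $H=t(F-K)-d$, the same barrier $\tilde H=H-\epsilon e^{At}f$ built from Lemma \ref{distance function}, the same use of Hamilton's estimate to discard $Q$, the same input from Corollary \ref{noncompact harnack p>1 b<2} to localize the first touching point, and the same choice $K=C_0(2-b)R_{max}$. Your trace/trace-free splitting of $A=\nabla^2v+Rc$ is only a repackaging of what the paper does by estimating $|\nabla^2v+\tfrac b2Rc|^2\ge\tfrac1n(\Delta v+\tfrac b2R)^2$ and then combining the leftover $-\tfrac{(b-2)^2}{2n}(p-1)t^2R^2$ with $+\tfrac{(b-2)^2}{2}(p-1)t^2|Rc|^2\le\tfrac{(b-2)^2}{2}(p-1)t^2R^2$: both routes produce exactly $-\tfrac1{b\alpha}(H+tK+d)^2$, the nonpositive cross terms in $ty$ and $tR$, and the coefficient $\tfrac{(b-2)^2(n-1)}{2n}(p-1)$ that fixes $C_0$, so the algebra is correct and identical in content.

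The one genuine slip is in the endgame: you assert that $H(x_0,t_0)>0$ forces $t_0$ to be bounded away from $0$ and let $A$ depend on that lower bound. No such uniform bound exists: Corollary \ref{noncompact harnack p>1 b<2} gives $t F\le \max\{b\alpha,1\}+C t R_{max}$, and since $d=\max\{b\alpha,\tfrac b2\}\le\max\{b\alpha,1\}$ for $b\le 2$, the inequality $t_0(F-K)>d$ yields no positive lower bound on $t_0$. A Young inequality that trades the gradient term against $-\tfrac{b-1}{b}t_0^2y^2$ alone leaves a remainder that is not linear in $t_0\epsilon\psi$, which is why you are forced to invoke a bound on $t_0$. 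The fix is the paper's: write $C|\nabla v|\le\epsilon_1\tfrac{|\nabla v|^2}{v}+\tfrac{C^2v}{4\epsilon_1}$ with $\epsilon_1=\tfrac{2(b-1)}{b\alpha}$, absorb the first piece together with $-\tfrac1{b\alpha}(\epsilon\psi)^2-\tfrac{(b-1)^2}{b\alpha}t_0^2y^2$ into the perfect square $-\tfrac1{b\alpha}\bigl(\epsilon\psi-(b-1)t_0y\bigr)^2\le0$, and kill the second piece, which is linear in $t_0\epsilon\psi$, by taking $A>\tfrac{b}{b-1}Cv_{max}$; then no information about $t_0$ is needed. With that correction your argument coincides with the paper's.
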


\begin{proof}

Let $H=t(F-K)-d$ for $d=\max\{b\alpha, \frac{b}{2}\}$ and some constant $K>0$ to be determined. From \eqref{eq4}, we have
\begin{align*}
t\mathcal{L}(H)&= t(F-K)+t^2\mathcal{L}(F)\\
&= H+d+2p t\nabla_i H\nabla_iv-[\frac{b-1}{v}+p-1]tQ+tR[\frac{b-1}{v}+p-1]\\
&\quad\ -2(p-1)t^2|\nabla^2 v+\frac{b}{2}Rc|^2+\frac{(b-2)^2}{2}(p-1)t^2|Rc|^2-\frac{1}{b}(H+tK+d)^2\\
&\quad\ -t[(p-1)R+\frac{2(b-1)}{b}\frac{R}{v}](H+tK+d) -\frac{b-1}{b}t^2y^2+\frac{(b-1)(2-b)}{b}\cdot \frac{R}{v}t^2y,
\end{align*}
where $Q$ is the quantity in \eqref{eqQ}.

Set $\tilde{H}=H-\epsilon \psi$, where $\psi=e^{At}f$ for some constant $A$ to be determined, and $f$ is the function in Lemma \ref{distance function}. It then follows from Theorem \ref{matrix harnack} and the choice of $d$ that
\begin{align*}
t\mathcal{L}(\tilde{H}) &\leq 2p t\nabla_i \tilde{H}\nabla_iv+2p t \epsilon e^{At}\nabla_i f\nabla_i v  -(d-1)(p-1)tR-\frac{1}{b\alpha}(H+tK+d)^2\\
&\quad\ -\frac{4(b-1)}{b^2n(p-1)}ty(H+tK+d) -\frac{2(2-b)}{bn}tR(H+tK+d)\\
&\quad\ +\frac{(b-2)^2(n-1)}{2n}(p-1)t^2R_{max}^2-t[(p-1)R+\frac{2(b-1)}{b}\frac{R}{v}]\tilde{H}-\frac{(b-1)^2}{b\alpha}t^2y^2\\
&\quad\ +H+d-\epsilon At\psi + (p-1)\epsilon tve^{At}(\Delta f).
\end{align*}

From Corollary \ref{noncompact harnack p>1 b<2}, we know that $\tilde{H}<0$ at $t=0$ and outside a fixed compact subset of $M$ for all $t\in(0,T]$. Again we claim $\tilde{H}<0$ on $M\times[0,T]$. If not, then $\tilde{H}=0$ at some first time $t_0>0$ and some point $x_0\in M$. Then, at $(x_0, t_0)$, we have
$$H=\epsilon\psi>0,\qquad \nabla\tilde{H}=0,\qquad\textrm{and}\qquad \mathcal{L}(\tilde{H})\geq 0.$$
Thus,
\begin{align*}
0&\leq t_0\mathcal{L}(\tilde{H})\\
&\leq 2p t_0 \epsilon e^{At_0}\nabla_i f\nabla_i v -[d+\frac{2(2-b)}{bn(p-1)}d-1](p-1)t_0R -\frac{1}{b\alpha}(\epsilon\psi)^2-\frac{1}{b\alpha}t_0^2K^2-\frac{1}{b\alpha}d^2\\
&\quad\  -\frac{2d}{b\alpha}\epsilon\psi+\frac{(b-2)^2(n-1)}{2n}(p-1)t_0^2R_{max}^2-\frac{(b-1)^2}{b\alpha}t_0^2y^2+\epsilon\psi+d\\
&\quad\ -\epsilon At_0\psi + (p-1)\epsilon t_0ve^{At_0}(\Delta f)\\
&< Ct_0\epsilon\psi|\nabla v| - [(\frac{2-2b}{bn(p-1)}+\frac{1}{\alpha})d-1]t_0R- \frac{1}{b\alpha}(\epsilon\psi)^2-(\frac{2d}{b\alpha}-1)\epsilon\psi-(\frac{d^2}{b\alpha}-d)\\
&\quad\ -\frac{(b-1)^2}{b\alpha}\frac{t_0^2|\nabla v|^4}{v^2}-\epsilon At_0\psi + Cvt_0\epsilon\psi-\frac{1}{b\alpha}t_0^2K^2+\frac{(b-2)^2(n-1)}{2n}(p-1)t_0^2R_{max}^2.
\end{align*}

It is not hard to check that with our choices of $b$ and $d$,
$$(\frac{2-2b}{bn(p-1)}+\frac{1}{\alpha})d-1\geq 0.$$
%Indeed, if $d=b\alpha\geq \frac{b}{2}$, then $\alpha=\frac{bn(p-1)}{2+bn(p-1)}\geq\frac{1}{2}$. Hence, we have $bn(p-1)\geq2$, and
%$$(\frac{2-2b}{bn(p-1)}+\frac{1}{\alpha})d-1=(b-1)[1-\frac{2\alpha}{n(p-1)}]=(b-1)[1-\frac{2b}{2+bn(p-1)}]\geq(b-1)(1-\frac{b}{2})\geq0.$$
%On the other hand, if $d=\frac{b}{2}\geq b\alpha$, then we have $bn(p-1)\leq 2$, and
%\begin{align*}
%(\frac{2-2b}{bn(p-1)}+\frac{1}{\alpha})d-1&=\frac{1-b}{n(p-1)}+\frac{2+bn(p-1)}{2n(p-1)}-1\\
%&=\frac{2(2-b)-(2-b)n(p-1)}{2n(p-1)}\geq\frac{(b-1)(2-b)}{bn(p-1)}\geq0.
%\end{align*}

By setting $K=\sqrt{\frac{b\alpha(p-1)(n-1)}{2n}}(2-b)R_{max}$, we have
\begin{align*}
0&\leq t_0\mathcal{L}(\tilde{H})\\
&\leq - \frac{1}{b\alpha}(\epsilon\psi)^2-\frac{(b-1)^2}{b\alpha}\frac{t_0^2|\nabla v|^4}{v^2}+ \frac{2(b-1)}{b\alpha}\frac{t_0|\nabla v|^2}{v}\epsilon\psi+\frac{b}{b-1}Cvt_0\epsilon\psi-\epsilon At_0\psi.
\end{align*}
This would lead to a contradiction if we choose $ A>\frac{b}{b-1}Cv_{max}$.

Therefore we have shown $\tilde{H}=H-\epsilon e^{At}f<0$ on $M\times [0,T]$. Letting $\epsilon\rightarrow 0$, we get $$H\leq 0.$$

\end{proof}

Now, by combining Theorem \ref{refined noncompact harnack p>1 b>2} and Theorem \ref{refined noncompact harnack p>1 b<2}, we get Theorem \ref{harnack PME}.\\

Moreover, letting $b\rightarrow1$ in Theorem \ref{refined noncompact harnack p>1 b<2}, we have

\begin{theorem}\label{noncompact harnack1 p>1 b=1}
Let $(M^n, g_{ij}(t))$, $t\in[0, T]$, be a complete solution to the Ricci flow with bounded nonnegative curvature operator. If $u$ is a bounded smooth positive solution to \eqref{eq2} with $p>1$, then for $v=\frac{p}{p-1}u^{p-1}$, we have
$$\frac{|\nabla v|^2}{v}-\frac{v_t}{v}-\frac{d}{t}\leq C_0R_{max}$$
on $M\times(0,T]$, where $d=\max\{\alpha,\frac{1}{2}\}$, $\alpha=\frac{n(p-1)}{2+n(p-1)}$, $C_0=\sqrt{\frac{\alpha(p-1)(n-1)}{2n}}$, and $R_{max}=\sup_{M\times[0,T]}R$.
\end{theorem}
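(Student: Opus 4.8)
The plan is to obtain this estimate simply by passing to the limit $b \to 1^+$ in Theorem \ref{refined noncompact harnack p>1 b<2}. Indeed, for every $b \in (1,2]$ that theorem gives, at each point $(x,t) \in M \times (0,T]$,
$$\frac{|\nabla v|^2}{v}-b\frac{v_t}{v}-(b-1)\frac{R}{v}-\frac{d(b)}{t}\leq C_0(b)(2-b)R_{max},$$
where I write $\alpha(b)=\frac{bn(p-1)}{2+bn(p-1)}$, $d(b)=\max\{b\alpha(b), \frac{b}{2}\}$, and $C_0(b)=\sqrt{\frac{b\alpha(b)(p-1)(n-1)}{2n}}$ to emphasize the dependence on $b$.

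First I would fix a point $(x,t) \in M \times (0,T]$ and regard the displayed inequality as a numerical inequality in the single variable $b$. Since $u$ is a fixed bounded smooth positive solution and $g(t)$ has bounded curvature, the quantities $v(x,t)$, $v_t(x,t)$, $|\nabla v(x,t)|^2$, $R(x,t)$ and $R_{max}$ are all finite and independent of $b$; in particular $(b-1)\frac{R(x,t)}{v(x,t)} \to 0$ as $b \to 1^+$. Next I would check that every coefficient above is continuous in $b$ at $b=1$: clearly $\alpha(b) \to \alpha = \frac{n(p-1)}{2+n(p-1)}$, hence $C_0(b) \to \sqrt{\frac{\alpha(p-1)(n-1)}{2n}} = C_0$, and, since the maximum of two continuous functions is continuous, $d(b) = \max\{b\alpha(b), \frac{b}{2}\} \to \max\{\alpha, \frac{1}{2}\} = d$.

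Therefore, letting $b \to 1^+$ in the displayed inequality yields
$$\frac{|\nabla v|^2}{v}-\frac{v_t}{v}-\frac{d}{t}\leq C_0 R_{max}$$
at the point $(x,t)$, and since $(x,t) \in M \times (0,T]$ was arbitrary the estimate holds on all of $M \times (0,T]$. There is essentially no obstacle here beyond recording the (routine) continuity of $d(b)$ at $b=1$ and noting that the field quantities remain finite as $b$ varies, so the pointwise passage to the limit is legitimate; I would also remark that although the auxiliary constants $A$ and $K$ in the proof of Theorem \ref{refined noncompact harnack p>1 b<2} degenerate as $b \to 1$, this is irrelevant, since the conclusion of that theorem is already established for each fixed $b>1$ before the limit is taken.
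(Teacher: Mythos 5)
Your proposal is correct and is exactly the paper's argument: the authors obtain this theorem precisely by letting $b\to 1$ in Theorem \ref{refined noncompact harnack p>1 b<2}, and your verification that all the constants depend continuously on $b$ (and that the degeneration of the auxiliary constants $A$, $K$ in that theorem's proof is harmless) fills in the only routine details.
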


In case $|\nabla v|$ is also bounded, we can improve the coefficient of $\frac{1}{t}$ in Theorem \ref{noncompact harnack1 p>1 b=1}.

\begin{theorem}\label{noncompact harnack2 p>1 b=1}
Suppose that $(M^n, g_{ij}(t))$, $t\in[0, T]$, is a complete solution to the Ricci flow with bounded nonnegative curvature operator. Let $u$ be a smooth positive solution to \eqref{eq2} for $p>1$ and $v=\frac{p}{p-1}u^{p-1}$. Assume that both $v$ and $|\nabla v|$ are bounded on $M\times[0,T]$.Then, we have
\begin{equation}\label{eq noncompact harnack2 p>1 b=1}
\frac{|\nabla v|^2}{v}-\frac{v_t}{v} -\frac{d}{t}\leq C_0R_{max}
\end{equation}
for all $t\in(0, T]$, where $d=\frac{n(p-1)}{2+n(p-1)}$, $C_0=\sqrt{\frac{\alpha(p-1)(n-1)}{2n}}$, and $R_{max}=\sup_{M\times[0,T]}R$.
\end{theorem}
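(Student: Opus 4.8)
The plan is to rerun the maximum-principle argument of Theorem~\ref{refined noncompact harnack p>1 b<2}, but carried out directly at the endpoint $b=1$, where a key simplification of \eqref{eq4} occurs and, thanks to the extra hypothesis that $|\nabla v|$ is bounded, the coefficient of $1/t$ can be lowered from $\max\{\alpha,\tfrac12\}$ to $\alpha$. Take $b=1$, so $c=1-b=0$, and
$$F=\frac{|\nabla v|^2}{v}-\frac{v_t}{v};$$
then \eqref{eq4} becomes
$$\mathcal{L}(F)=2p\nabla_iF\nabla_iv-(p-1)Q'-2(p-1)\bigl|\nabla^2v+\tfrac12 Rc\bigr|^2+\tfrac12(p-1)|Rc|^2-F^2-(p-1)RF,$$
where $Q'=\tfrac{\partial R}{\partial t}-2\nabla_iR\nabla_iv+2R_{ij}\nabla_iv\nabla_jv$ and where the terms $-\tfrac{b-1}{b}y^2$ and $-\tfrac{(b-1)(b-2)}{b}y\tfrac Rv$ of \eqref{eq4} have both dropped out. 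Moreover, at $b=1$, $F=-(p-1)\Delta v-(p-1)R$, so $\Delta v+\tfrac12 R=-\tfrac{F}{p-1}-\tfrac R2$. I would then set $H=t(F-K)-d$ with $d=\alpha=\tfrac{n(p-1)}{2+n(p-1)}$ and $K=C_0R_{max}$ (with $C_0$ to be pinned down), introduce $\widetilde H=H-\epsilon\psi$ with $\psi=e^{At}f$ and $f$ the distance-like function from Lemma~\ref{distance function}, and argue by contradiction. Since $u$ (hence $v$) is bounded, Theorem~\ref{noncompact harnack1 p>1 b=1} gives $F\le \tfrac1t+C_0R_{max}$, so $H$ is bounded above on $M\times[0,T]$ and thus $\widetilde H<0$ outside a fixed compact set, while $\widetilde H|_{t=0}=-d-\epsilon\psi<0$ because $F$ is finite at $t=0$. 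Hence if $\widetilde H$ is not everywhere $\le 0$ it attains a positive maximum at some $(x_0,t_0)$ with $t_0>0$, where $H=\epsilon\psi$, $\nabla H=\epsilon\nabla\psi$, and $\mathcal{L}(\widetilde H)\ge0$.

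At $(x_0,t_0)$, using $t\mathcal{L}(H)=(H+d)+t^2\mathcal{L}(F)$, Hamilton's trace estimate (Theorem~\ref{matrix harnack}) with $V_i=-\nabla_iv$, which gives $Q=tQ'+R\ge0$, the inequality $|\nabla^2v+\tfrac12 Rc|^2\ge\tfrac1n(\Delta v+\tfrac12 R)^2=\tfrac1{n(p-1)^2}\bigl(F+\tfrac{(p-1)R}{2}\bigr)^2$, and $|Rc|^2\le R^2$, the quadratic-in-$F$ terms collect, after expanding the square and writing $G:=tF=H+tK+d$, into $-\tfrac1\alpha G^2-(\tfrac2n+p-1)(tR)G+\tfrac{(p-1)(n-1)}{2n}(tR)^2$ via the identities $1+\tfrac{2}{n(p-1)}=\tfrac1\alpha$ and $(\tfrac2n+p-1)\alpha=p-1$, together with the leftover $+(p-1)tR$ from $-(p-1)t^2Q'=-(p-1)tQ+(p-1)tR$. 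Substituting $G=\epsilon\psi+tK+d$ and $d=\alpha$: the $R$-linear-in-nothing contribution $(p-1)tR-(\tfrac2n+p-1)d\,tR$ vanishes exactly --- this is where $d=\alpha$ is forced, and is the $b=1$ degeneration of the constraint ``$(\tfrac{2-2b}{bn(p-1)}+\tfrac1\alpha)d-1\ge0$'' of Theorem~\ref{refined noncompact harnack p>1 b<2}; the choice $C_0=\sqrt{\tfrac{\alpha(p-1)(n-1)}{2n}}$ makes $-\tfrac1\alpha(tK)^2+\tfrac{(p-1)(n-1)}{2n}(tR)^2\le0$; and the remaining terms, namely $(H+d)-\tfrac{d^2}{\alpha}-\tfrac{2d}{\alpha}\epsilon\psi=-\epsilon\psi$ together with $-\tfrac1\alpha(\epsilon\psi)^2-\tfrac{2tK}{\alpha}\epsilon\psi-2tK-(\tfrac2n+p-1)(tR)(\epsilon\psi+tK)$, are all $\le0$.

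The only surviving term is the gradient term $2pt^2\nabla_iF\nabla_iv$, and controlling it is the main obstacle: for $b>1$ it was absorbed using the favorable negative term $-\tfrac{(b-1)^2}{b\alpha}\tfrac{t^2|\nabla v|^4}{v^2}$ of \eqref{eq4}, which is identically zero at $b=1$. This is precisely where the hypotheses that $|\nabla v|$ and $v$ are bounded enter: at $(x_0,t_0)$ one has $\nabla F=\tfrac1{t}\nabla H=\tfrac{\epsilon e^{At}}{t}\nabla f$, so
$$2pt^2\nabla_iF\nabla_iv=2pt\epsilon e^{At}\nabla_if\nabla_iv\le 2pC\|\nabla v\|_\infty\, t\epsilon e^{At}\le 2pC\|\nabla v\|_\infty\, t\epsilon\psi,$$
using $|\nabla f|\le C$ and $f\ge1$; likewise the term $(p-1)tv\,\epsilon e^{At}\Delta f$ arising from $\mathcal{L}(\widetilde H)=\mathcal{L}(H)-\epsilon\bigl(A\psi-(p-1)v e^{At}\Delta f\bigr)$ is $\le (p-1)\|v\|_\infty\|\Delta f\|_\infty\, t\epsilon\psi$, using $|\nabla\nabla f|\le C$. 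Hence $0\le t_0\mathcal{L}(\widetilde H)\le(C_3-A)t_0\epsilon\psi-\epsilon\psi+(\text{nonpositive terms})$, where $C_3$ depends only on $n,p,C,\|v\|_\infty,\|\nabla v\|_\infty$; choosing $A>C_3$ makes the right-hand side strictly negative, a contradiction. Therefore $\widetilde H<0$ on $M\times[0,T]$, and letting $\epsilon\to0$ yields $H\le0$, which is exactly \eqref{eq noncompact harnack2 p>1 b=1}. Once one decides to use the $|\nabla v|$-bound to kill the $\nabla F\cdot\nabla v$ term, the remainder is bookkeeping parallel to Theorem~\ref{refined noncompact harnack p>1 b<2}, with all the $\tfrac Rv$- and $y$-type terms conveniently absent at $b=1$.
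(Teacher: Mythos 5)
Your proposal is correct and follows essentially the same route as the paper: the same barrier $H=t(F-K)-d$ with $d=\alpha$ and $K=C_0R_{max}$, the same perturbation $\widetilde H=H-\epsilon e^{At}f$ via the distance-like function of Lemma \ref{distance function}, Hamilton's trace estimate plus $|\nabla^2v+\tfrac12 Rc|^2\ge\tfrac1n(\Delta v+\tfrac12R)^2$ to produce the $-\tfrac1\alpha(tF)^2$ term, and the boundedness of $|\nabla v|$ to absorb the gradient term $2pt\epsilon e^{At}\nabla f\cdot\nabla v$ at the maximum point by taking $A$ large. The bookkeeping (the exact cancellation of the $R$-linear terms forced by $d=\alpha$, and the choice of $C_0$ killing the $R_{max}^2$ term) matches the paper's computation.
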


\begin{proof}
Let $H=t(F-K)-d$ for $d=\frac{n(p-1)}{2+n(p-1)}$, and some constant $K>0$ to be determined. From \eqref{eq4}, we have
\begin{align*}
t\mathcal{L}(H)&= 2p t\nabla_i H\nabla_iv-(p-1)tQ+(p-1)tR-2(p-1)t^2|\nabla^2 v+\frac{1}{2}Rc|^2\\
&\quad\ +\frac{1}{2}(p-1)t^2|Rc|^2-(H+tK+d)^2 -(p-1)tR(H+tK+d)+H+d,
\end{align*}
where $Q$ is the quantity in \eqref{eqQ}.

Theorem \ref{matrix harnack} implies that
\begin{align*}
t\mathcal{L}(H)
%&\leq 2p t\nabla_i H\nabla_iv-(\alpha-1)(p-1)tR-\frac{2}{n(p-1)}[H+tK+\alpha+\frac{1}{2}(p-1)tR]^2\\
%&\quad\ +\frac{1}{2}(p-1)t^2R^2-(H+tK+\alpha)^2-(p-1)tRH+H+\alpha\\
&\leq 2p t\nabla_i H\nabla_iv-(d-1)(p-1)tR-\frac{1}{d}(H+tK+d)^2-\frac{2}{n}tR(H+tK+d)\\
&\quad\ -\frac{1}{2n}(p-1)t^2R^2+\frac{1}{2}(p-1)t^2R^2-(p-1)tRH+H+d.
\end{align*}

Now, let $\tilde{H}=H-\epsilon \psi$, where $\psi=e^{At}f$ for some constant $A$ to be determined, and $f$ is the function in Lemma \ref{distance function}. Then
\begin{align*}
t\mathcal{L}(\tilde{H})&\leq 2p t\nabla_i \tilde{H}\nabla_iv+2p t \epsilon e^{At}\nabla_i f\nabla_i v  -(d-1)(p-1)tR-\frac{1}{d}(H+tK+d)^2\\
&\quad\ -\frac{2}{n}tR(H+tK+d)+\frac{(n-1)}{2n}(p-1)t^2R_{max}^2-(p-1)tR\tilde{H}+H+d\\
&\quad\ -\epsilon At\psi + (p-1)\epsilon tve^{At}(\Delta f).
\end{align*}

From Corollary \ref{noncompact harnack p>1 b<2}, we know that $\tilde{H}<0$ at $t=0$ and outside a fixed compact subset of $M$ for $t\in(0,T]$. Suppose that $\tilde{H}=0$ for the first time at $t=t_0>0$ and some point $x_0\in M$. Then, at $(x_0, t_0)$, we have
$$H=\epsilon\psi>0,\qquad \nabla\tilde{H}=0,\qquad\textrm{and}\qquad \mathcal{L}(\tilde{H})\geq 0.$$
Thus,
\begin{align*}
0&\leq t_0\mathcal{L}(\tilde{H})\\
&\leq 2p t_0 \epsilon e^{At_0}\nabla_i f\nabla_i v  -\frac{1}{d}(\epsilon\psi)^2-d-\frac{1}{d}t_0^2K^2-2\epsilon\psi+\frac{(n-1)}{2n}(p-1)t_0^2R_{max}^2\\
&\quad\ +\epsilon\psi+d-\epsilon At_0\psi + (p-1)\epsilon t_0ve^{At_0}(\Delta f)\\
&< Ct_0\epsilon\psi|\nabla v| - \frac{1}{d}(\epsilon\psi)^2-\epsilon\psi-\epsilon At_0\psi + Cvt_0\epsilon\psi-\frac{1}{d}t_0^2K^2+\frac{(n-1)}{2n}(p-1)t_0^2R_{max}^2.
\end{align*}
Now if we choose $K=\sqrt{\frac{d(p-1)(n-1)}{2n}}R_{max}$, then we get
\begin{align*}
0&\leq t_0\mathcal{L}(\tilde{H})\leq t_0\epsilon\psi (C|\nabla v|+Cv-A)<0,
\end{align*}
provided we choose furthermore $ A>C(v_{max}+|\nabla v|_{max})$. Hence, we get a contradiction.
Therefore, $\tilde{H}=H-\epsilon e^{At}f<0$ on $M\times [0,T]$. Letting $\epsilon\rightarrow 0$, we see that $H\leq 0$.

\end{proof}

Finally, by Integrating the differential Harnack quantity in Theorems \ref{harnack PME} along space-time curves, we obtain the following Harnack inequalities for $v$.

\begin{corollary}\label{cor1} Under the same assumptions as in Theorem \ref{harnack PME}, if moreover, $v_{\min}=\inf_{M\times[0,T]}v>0$, then for any points $x_1$,$ x_2\in M$, $0<t_1<t_2\leq T$ and $b\geq 1$, we have
\begin{equation}\label{eq cor1}
v(x_1, t_1)\leq v(x_2, t_2)\cdot (\frac{t_2}{t_1})^{d/b}exp\left( \frac{\Gamma}{v_{min}}+\frac{|b-2|}{b}C_0 R_{max}(t_2-t_1)\right),
\end{equation}
where $d$, $C_0$ and $R_{max}$ are the constants in Theorem \ref{harnack PME}, and $\Gamma=\inf_{\gamma}\int_{t_1}^{t_2}(\frac{b-1}{b}R+\frac{b}{4}\left|\frac{d\gamma}{d\tau}\right|^2_{g_{ij}(\tau)})d\tau$ with the infimum taking over all smooth curves $\gamma(\tau)$ in $M$, $\tau\in[t_1, t_2]$, so that $\gamma(t_1)=x_1$ and $\gamma(t_2)=x_2$.

\end{corollary}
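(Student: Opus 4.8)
The strategy is to integrate the differential Harnack inequality of Theorem~\ref{harnack PME} along space-time curves, exactly as in the proofs of Corollary~\ref{cor 1.1} and Corollary~\ref{cor 1.2}. Since nonnegative curvature operator forces $R\ge 0$, I would first divide the conclusion of Theorem~\ref{harnack PME} by $b$ and rearrange it as
$$\frac{v_t}{v}\ge \frac{1}{b}\frac{|\nabla v|^2}{v}-\frac{b-1}{b}\frac{R}{v}-\frac{d}{bt}-\frac{|b-2|}{b}C_0R_{max}.$$
Fix any smooth curve $\gamma:[t_1,t_2]\to M$ with $\gamma(t_1)=x_1$ and $\gamma(t_2)=x_2$, and differentiate $v$ along the track $(\gamma(\tau),\tau)$:
$$\frac{d}{d\tau}\ln v(\gamma(\tau),\tau)=\frac{v_t}{v}+\frac{\langle\nabla v,\dot\gamma\rangle_{g(\tau)}}{v}.$$
Inserting the lower bound for $v_t/v$ and then completing the square in $|\nabla v|$ via $\frac{1}{b}\frac{|\nabla v|^2}{v}+\frac{\langle\nabla v,\dot\gamma\rangle}{v}\ge -\frac{b}{4}\frac{|\dot\gamma|^2}{v}$ eliminates the gradient of $v$ entirely and yields
$$\frac{d}{d\tau}\ln v(\gamma(\tau),\tau)\ge -\frac{1}{v}\Big(\frac{b}{4}|\dot\gamma|^2_{g(\tau)}+\frac{b-1}{b}R\Big)-\frac{d}{bt}-\frac{|b-2|}{b}C_0R_{max}.$$

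Next, using $v\ge v_{\min}>0$ together with $R\ge 0$ and $b-1\ge 0$ to replace the $v$ in the first denominator by $v_{\min}$, I would integrate in $\tau$ over $[t_1,t_2]$. The term $\int_{t_1}^{t_2}\frac{d}{b\tau}\,d\tau$ contributes $\frac{d}{b}\ln(t_2/t_1)$, the last term contributes $\frac{|b-2|}{b}C_0R_{max}(t_2-t_1)$, and the remaining integral is bounded below by $-\frac{1}{v_{\min}}\int_{t_1}^{t_2}\big(\frac{b}{4}|\dot\gamma|^2_{g(\tau)}+\frac{b-1}{b}R\big)\,d\tau$. Since the left-hand side $\ln v(x_2,t_2)-\ln v(x_1,t_1)$ of the resulting inequality does not depend on $\gamma$, I can take the infimum over all admissible curves, which replaces that integral by $\Gamma$. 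Rearranging and exponentiating then gives the claimed Harnack inequality for $v$.

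The argument is essentially routine, so there is no serious obstacle; the only delicate points are (i) the cross term $\langle\nabla v,\dot\gamma\rangle$ must be absorbed using precisely the $\frac{1}{b}|\nabla v|^2/v$ produced by Theorem~\ref{harnack PME}, and it is this that pins down the weight $b/4$ in front of $|\dot\gamma|^2$ in the definition of $\Gamma$; and (ii) all norms of $\dot\gamma$ and $\nabla v$ must be computed with respect to the evolving metric $g(\tau)$, consistent with the chain-rule identity above. The hypothesis $v_{\min}>0$ is used exactly once, to pass from $R/v$ and $|\dot\gamma|^2/v$ to $R/v_{\min}$ and $|\dot\gamma|^2/v_{\min}$.
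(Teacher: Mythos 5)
Your proof is correct and is essentially identical to the paper's: both integrate $\frac{d}{d\tau}\ln v=\frac{v_\tau}{v}+\frac{\langle\nabla v,\dot\gamma\rangle}{v}$ along a space-time curve, absorb the cross term via $\frac{\langle\nabla v,\dot\gamma\rangle}{v}\geq-\frac{|\nabla v|^2}{bv}-\frac{b}{4v}|\dot\gamma|^2$, invoke Theorem~\ref{harnack PME}, bound $v$ below by $v_{\min}$ (using $R\geq 0$ and $b\geq 1$), and then take the infimum over curves and exponentiate. No issues.
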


\begin{proof}
For any curve $\gamma(\tau)$, $\tau\in[t_1, t_2]$, from $x_1$ to $x_2$, we have
\begin{align*}
\log\frac{v(x_2, t_2)}{v(x_1,t_1)}&=\int_{t_1}^{t_2}\frac{d}{d\tau}\log v(\gamma(\tau),\tau)d\tau\\
&=\int_{t_1}^{t_2}(\frac{v_{\tau}}{v}+\frac{\nabla v}{v}\cdot\frac{d\gamma}{d\tau})d\tau\\
&\geq\int_{t_1}^{t_2}(\frac{v_{\tau}}{v}-\frac{|\nabla v|^2}{bv} - \frac{b}{4v}\left|\frac{d\gamma}{d\tau}\right|^2_{g_{ij}(\tau)})d\tau
\end{align*}
Theorem \ref{refined noncompact harnack p>1 b>2} implies that
\begin{align*}
\log\frac{v(x_2, t_2)}{v(x_1,t_1)}
&\geq\int_{t_1}^{t_2}(-\frac{(b-1)R}{bv_{min}}-\frac{d}{b\tau}-\frac{|b-2|}{b}C_0 R_{max} - \frac{b}{4v_{min}}\left|\frac{d\gamma}{d\tau}\right|^2_{g_{ij}(\tau)})d\tau,
%&=\frac{d}{2}\log\frac{t_1}{t_2} -C_1R_{max}(t_2-t_1) - \frac{\Gamma}{2v_{min}},
\end{align*}
which gives \eqref{eq cor1} after exponentiating the both sides.
\end{proof}

\begin{corollary}\label{cor2}
Under the same assumptions as in Theorem \ref{harnack PME}, for any points $x_1$,$ x_2\in M$, $0<t_1<t_2\leq T$ and $b\geq 1$, we have
\begin{equation}\label{eq cor2}
v(x_2, t_2)-v(x_1, t_1)\geq -\frac{d}{b}v_{max}\ln \frac{t_2}{t_1} - (\frac{b-1}{b}+\frac{|b-2|}{b}C_0v_{max})R_{max}(t_2-t_1)-\frac{b}{4}\frac{d_{t_1}^2(x_1,x_2)}{t_2-t_1},
\end{equation}
where $d$, $C_0$ and $R_{max}$ are the constants in Theorem \ref{harnack PME} , and $v_{max}=\sup_{M\times[0,T]}v$.
\end{corollary}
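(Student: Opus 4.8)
The plan is to mimic the argument used for Corollary~\ref{cor1}, but to integrate $v$ itself along a space–time curve rather than $\log v$. First I would rewrite the differential Harnack inequality of Theorem~\ref{harnack PME} as a pointwise lower bound for $v_t$. Since $u>0$ and $p>1$, the function $v=\frac{p}{p-1}u^{p-1}$ is positive, so multiplying the inequality
$$\frac{|\nabla v|^2}{v}-b\frac{v_t}{v}-(b-1)\frac{R}{v}-\frac{d}{t}\leq C_0|b-2|R_{max}$$
by $\frac{v}{b}>0$ gives
$$v_t\ \geq\ \frac{|\nabla v|^2}{b}-\frac{b-1}{b}R-\frac{d}{bt}\,v-\frac{|b-2|}{b}C_0R_{max}\,v$$
on $M\times(0,T]$.

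Next, fix $0<t_1<t_2\leq T$ and a smooth curve $\gamma(\tau)$, $\tau\in[t_1,t_2]$, with $\gamma(t_1)=x_1$, $\gamma(t_2)=x_2$. By the chain rule and Cauchy–Schwarz, $\frac{d}{d\tau}v(\gamma(\tau),\tau)=v_\tau+\nabla v\cdot\dot\gamma\geq v_\tau-|\nabla v|\,|\dot\gamma|$, and Young's inequality with weight $2/b$ gives $|\nabla v|\,|\dot\gamma|\leq\frac{|\nabla v|^2}{b}+\frac{b}{4}|\dot\gamma|^2_{g_{ij}(\tau)}$. Plugging in the lower bound for $v_\tau$, the $\frac{|\nabla v|^2}{b}$ terms cancel and we arrive at
$$\frac{d}{d\tau}v(\gamma(\tau),\tau)\ \geq\ -\frac{b-1}{b}R-\frac{d}{b\tau}\,v-\frac{|b-2|}{b}C_0R_{max}\,v-\frac{b}{4}\Big|\frac{d\gamma}{d\tau}\Big|^2_{g_{ij}(\tau)}.$$
Since the curvature operator is nonnegative we have $R\geq0$, and using $b\geq1$, $d\geq0$, together with $v\leq v_{max}$ and $R\leq R_{max}$, the right-hand side is bounded below by $-\frac{b-1}{b}R_{max}-\frac{d}{b\tau}v_{max}-\frac{|b-2|}{b}C_0R_{max}v_{max}-\frac{b}{4}|\dot\gamma|^2_{g_{ij}(\tau)}$. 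Integrating over $[t_1,t_2]$ then yields
$$v(x_2,t_2)-v(x_1,t_1)\ \geq\ -\Big(\frac{b-1}{b}+\frac{|b-2|}{b}C_0v_{max}\Big)R_{max}(t_2-t_1)-\frac{d}{b}v_{max}\ln\frac{t_2}{t_1}-\frac{b}{4}\int_{t_1}^{t_2}\Big|\frac{d\gamma}{d\tau}\Big|^2_{g_{ij}(\tau)}d\tau.$$

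The last step, and the only point requiring a small amount of care, is to handle the time-dependent metric in the kinetic term and replace $\int_{t_1}^{t_2}|\dot\gamma|^2_{g_{ij}(\tau)}\,d\tau$ by $\frac{d_{t_1}^2(x_1,x_2)}{t_2-t_1}$. I would choose $\gamma$ to be a constant-speed minimizing geodesic for the metric $g_{ij}(t_1)$, reparametrized on $[t_1,t_2]$, so that $|\dot\gamma|^2_{g_{ij}(t_1)}\equiv \frac{d_{t_1}^2(x_1,x_2)}{(t_2-t_1)^2}$. Because $\partial_t g_{ij}=-2R_{ij}\leq0$ under the Ricci flow with nonnegative Ricci curvature, the metric is nonincreasing in $t$, hence $|\dot\gamma|^2_{g_{ij}(\tau)}\leq|\dot\gamma|^2_{g_{ij}(t_1)}$ for all $\tau\geq t_1$, giving $\int_{t_1}^{t_2}|\dot\gamma|^2_{g_{ij}(\tau)}\,d\tau\leq\frac{d_{t_1}^2(x_1,x_2)}{t_2-t_1}$. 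Substituting this bound into the displayed inequality produces exactly \eqref{eq cor2}. The whole argument is routine once the $v_t$ estimate is in hand; the main (minor) obstacle is simply keeping track of the correct Young weight $2/b$ and invoking the monotonicity of the evolving metric to pass from $g_{ij}(\tau)$ to $g_{ij}(t_1)$ in the distance term.
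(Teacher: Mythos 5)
Your proposal is correct and follows essentially the same argument as the paper: integrate $v$ along a constant-speed $g(t_1)$-geodesic, apply Young's inequality with the weight $b$, insert the lower bound for $v_\tau$ from Theorem \ref{harnack PME}, and use the monotonicity of the metric under the Ricci flow with nonnegative Ricci curvature to control the kinetic term by $d_{t_1}^2(x_1,x_2)/(t_2-t_1)$.
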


\begin{proof}
Let $\gamma(\tau)$, $\tau\in[t_1, t_2]$, be a geodesic from $x_1$ to $x_2$ at time $t_1$ with constant speed $\frac{d_{t_1}(x_1,x_2)}{t_2-t_1}$. Then,
\begin{align*}
v(x_2, t_2)-v(x_1,t_1)&=\int_{t_1}^{t_2}\frac{d}{d\tau}v(\gamma(\tau),\tau)d\tau\\
&=\int_{t_1}^{t_2}(v_{\tau}+(\nabla v, \gamma_{\tau}))d\tau\\
&\geq\int_{t_1}^{t_2}(v_{\tau}-\frac{1}{b}|\nabla v|^2 - \frac{b}{4}\left|\frac{d\gamma}{d\tau}\right|^2_{g_{ij}(\tau)})d\tau.
\end{align*}
Moreover, since the Ricci curvature is positive, the evolving metric is shrinking as time $\tau$ increasing. This fact together with Theorem \ref{refined noncompact harnack p>1 b>2} implies that
\begin{align*}
v(x_2, t_2)-v(x_1,t_1)&\geq\int_{t_1}^{t_2}(-\frac{b-1}{b}R-\frac{d}{b\tau}v -\frac{|b-2|}{b}C_0R_{max}v- \frac{b}{4}\left|\frac{d\gamma}{d\tau}\right|^2_{g_{ij}(t_1)})d\tau.
\end{align*}
%which gives \eqref{eq cor2}.
\end{proof}

Obviously, by taking $b=2$ in Corollaries \ref{cor1} and \ref{cor2}, we have Corollaries \ref{cor 1.1} and \ref{cor 1.2}, respectively. Moreover, under the assumptions and notations as in Theorem \ref{noncompact harnack2 p>1 b=1}, one can show that the point-wise Harnack inequalities of $v$ generated by Theorem \ref{noncompact harnack2 p>1 b=1} can take the forms of \eqref{eq cor1} and \eqref{eq cor2} with $b=1$.

\bigskip
\bibliography{mybib}
\bibliographystyle{plain}

\end{document}